\let\mathcal\mathscr
\def\Z{{\bf Z}}
\def\C{{\bf C}}
\def\Q{{\bf Q}}
\def\P{{\bf P}}
\def\cF{\mathcal{F}}
\def\cL{\mathcal{L}}
\def\cO{\mathcal{O}}
\def\cG{\mathcal{G}}
\def\cP{\mathcal{P}}
\def\cH{\mathcal{H}}
\def\cE{\mathcal{E}}
\def\cS{\mathcal{S}}
\def\cM{\mathcal{M}}
\def\cK{\mathcal{K}}
\def\cQ{\mathcal{Q}}
\def\cY{\mathcal{Y}}
\def\cX{\mathcal{X}}
\def\lra{\longrightarrow}
\def\llra{\hbox to 10mm{\rightarrowfill}}
\def\lllra{\hbox to 15mm{\rightarrowfill}}
\def\llla{\hbox to 10mm{\leftarrowfill}}
\def\lllla{\hbox to 15mm{\leftarrowfill}}
\def\dra{\dashrightarrow}
\def\isom{\simeq}
\def\eps{\varepsilon}
\def\ie{\hbox{i.e.}}
\def\Im{\mathop{\rm Im}\nolimits}
\DeclareMathOperator{\rk}{rk}
\DeclareMathOperator{\Ker}{Ker}
\DeclareMathOperator{\Pic}{Pic}
\DeclareMathOperator{\Hom}{Hom}
\def\Im{\mathop{\rm Im}\nolimits}
\DeclareMathOperator{\Hilb}{Hilb}
\DeclareMathOperator{\Res}{Res}
\DeclareMathOperator{\Ext}{Ext}
\DeclareMathOperator{\Id}{Id}
\DeclareMathOperator{\PGL}{PGL}
\DeclareMathOperator{\Int}{Int}
\DeclareMathOperator{\ev}{ev}
\def\llra{\hbox to 10mm{\rightarrowfill}}
\def\lllra{\hbox to 15mm{\rightarrowfill}}
\newtheorem{lemm}{Lemma}[section]
\newtheorem{theo}[lemm]{Theorem}
\newtheorem{coro}[lemm]{Corollary}
\newtheorem{prop}[lemm]{Proposition}
\newtheorem*{claim}{Claim}
\theoremstyle{definition}
\newtheorem{rema}[lemm]{Remark}
\theoremstyle{remark}
\newtheorem*{remark*}{Remark}
\newtheorem*{note*}{Note}
 \def\moins{\mathop{\hbox{\vrule height 3pt depth -2pt
width 5pt}\,}}
\title{Hyper-K\"ahler fourfolds and Grassmann geometry}
\author{Olivier Debarre}
\address{\'Ecole Normale Sup\'{e}rieure\\
D\'{e}partement de Math\'{e}matiques et Applications\\
45 rue d'Ulm\\
75230 Paris cedex 05 -- France}
\email{odebarre@dma.ens.fr}
\author{Claire Voisin}
\address{IH\'{E}S and Institut de Math\'{e}matiques de Jussieu, 175 rue du Chevaleret, 75013 Paris, France}
\email{voisin@math.jussieu.fr}
\subjclass[2000]{14J35, 14M15, 14J70.}
\begin{document}

\maketitle

   \begin{abstract}
   We construct a new 20-dimensional family of algebraic hyper-K\"ahler fourfolds and prove that they are
  deformation-equivalent to the second punctual Hilbert scheme of a K3 surface of degree 22.

   \end{abstract}

\section{Introduction}
An irreducible hyper-K\"ahler manifold is a compact K\"ahler manifold whose space of holomorphic 2-forms is generated by
an everywhere  nondegenerate form. It is known, as a consequence of the Kodaira embedding theorem and the study of the period map,
that   algebraic hyper-K\"ahler manifolds form a countable union of hypersurfaces in the
local universal deformation space of any hyper-K\"ahler manifold.

In \cite{be}, Beauville described, in each dimension $2n$, two series of such varieties:
\begin{enumerate}
\item    the $n$-th punctual Hilbert scheme $S^{[n]}$ of a K3 surface $S$;
\item   the fiber at the origin of the Albanese map   of the $(n+1)$-st punctual Hilbert scheme of an abelian surface.
\end{enumerate}
All of the irreducible  hyper-K\"ahler manifolds constructed later on  have been  proved to be deformation-equivalent to one of Beauville's examples, with the exception of  two sporadic families of  examples constructed by O'Grady in \cite{ogrady2}, { in dimensions 6 and 10.}

Beauville's examples all have, in dimension at least 4,  Picard number $\geq 2$, while  a very general algebraic deformation
has Picard number $1$, hence is not of the same type. There are very few explicit geometric descriptions for these  deformations. More precisely, there are,
 to our knowledge, only three such families that are explicitly described, each of which is  20-dimensional and parametrizes general polarized deformations of the second punctual Hilbert scheme of a K3 surface:
\begin{enumerate}
\item Beauville and Donagi  proved in \cite{bedo} that the variety of lines $F(X)$ on a smooth cubic hypersurface  $X\subset \P^5$ is an algebraic hyper-K\"ahler fourfold.
This gives a 20-dimensional  moduli space of fourfolds, and along an explicitly described hypersurface  in this moduli space (corresponding to ``Pfaffian'' cubics), $F(X)$
  is isomorphic to the second punctual Hilbert scheme of a general K3 surface $S$ of degree 14.
\item Iliev and Ranestad proved in \cite{ilievra}
% and \cite{ilievra2}
that the variety $V(X)$ of sum of powers of a general cubic  $X\subset \P^5$ as above is another
algebraic hyper-K\"ahler fourfold, with $20$ moduli. Along another hypersurface in the moduli space (corresponding to ``apolar'' cubics),
$V(X)$ is also isomorphic to $S^{[2]}$.
While the Hodge structure on  $H^2(V(X),\Z)$  is presumably isogenous to that of  $H^2(F(X),\Z)$ (a fact which is not known),
 it is shown in \cite{ilievra2}
 that the  polarization on $V(X)$ is in general numerically different from the Pl\"ucker polarization on $F(X)$. This guarantees that we have
two different families of deformations of $S^{[2]}$.
\item O'Grady  constructed in \cite{ogrady} a $20$-parameter family of hyper-K\"ahler algebraic fourfolds. They are
quasi-\'{e}tale double covers of certain sextic hypersurfaces  constructed by Eisenbud, Popescu, and Walter, and are deformations of the second punctual Hilbert scheme of a general K3 surface of degree 10.
\end{enumerate}

Our purpose in this paper is to construct and study  another  family of hyper-K\"ahler fourfolds, which is close in spirit to the
Beauville-Donagi family:  it is related to the geometry of Grassmannians, and there is an associated Fano hypersurface which
will play the role of the cubic hypersurface in \cite{bedo}.
The Grassmannian considered here is $G(6,V_{10})$, which parametrizes vector subspaces of dimension 6 of a fixed   vector space
$V_{10}$ of dimension 10.
Our starting point, which came to us following a discussion with
 Peskine,  is a $3$-form $\sigma \in \bigwedge^{3}V_{10}^*$. A dimension count shows that the moduli space of such $\sigma$
is $20$-dimensional.

We associate with  $\sigma$ two varieties: a hypersurface $F_\sigma$ in $G(3,V_{10})$, and a fourfold
$Y_\sigma$ in $G(6,V_{10})$.
Our first result is the following.

\begin{theo}
  There is a natural correspondence
$G_\sigma\subset Y_\sigma\times F_\sigma$, which is of relative dimension $9$ over $Y_\sigma$. When $Y_\sigma$ and $F_\sigma$ are smooth of the expected dimension,
 this correspondence induces an isomorphism of rational Hodge structures:
$$H^{20}(F_\sigma,\Q)_{\rm van}\isom  H^2(Y_\sigma,\Q)_{\rm van}.$$
The Hodge structure on the left-hand-side has Hodge numbers $h^{9,11}=h^{11,9}=1$ and  $h^{10,10}=20$, the other Hodge numbers being $0$.
\end{theo}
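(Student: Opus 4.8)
The plan is to prove the statement in two stages: first determine $H^{20}(F_\sigma,\Q)_{\rm van}$ together with its Hodge numbers, then show that the correspondence $G_\sigma$ induces an isomorphism onto $H^2(Y_\sigma,\Q)_{\rm van}$, following the method of Beauville and Donagi \cite{bedo} for the cubic fourfold. I use throughout that $F_\sigma$ is the zero locus of the section of $\bigwedge^3\mathcal{U}_3^*=\mathcal{O}_{G(3,V_{10})}(1)$ induced by $\sigma$, hence a smooth hyperplane section of the $21$-dimensional Grassmannian $G(3,V_{10})$; that $Y_\sigma$ is the zero locus of the corresponding section of $\bigwedge^3\mathcal{U}_6^*$ on $G(6,V_{10})$, a smooth fourfold; and that, since $\sigma|_{W_6}=0$ forces $\sigma|_{W_3}=0$, the first projection $\pi_Y\colon G_\sigma\to Y_\sigma$ is the restriction to $Y_\sigma$ of the Grassmann bundle $\mathrm{Gr}(3,\mathcal{U}_6)\to G(6,V_{10})$. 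In particular $\pi_Y$ is smooth and proper of relative dimension $9$, its fiber over $[W_6]$ being the sub-Grassmannian $G(3,W_6)$, which $\pi_F$ maps isomorphically onto a sub-Grassmannian of $F_\sigma$; consequently $\Phi:=\pi_{Y*}\circ\pi_F^*\colon H^{20}(F_\sigma,\Q)\to H^2(Y_\sigma,\Q)$ is a morphism of Hodge structures of bidegree $(-9,-9)$.

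Stage one. By the Lefschetz theorems for the hyperplane section $F_\sigma\subset G(3,V_{10})$, the Hodge structure on $H^j(F_\sigma,\Q)$ is of Tate type for $j\neq 20$, while $H^{20}(F_\sigma,\Q)=H^{20}(G(3,V_{10}),\Q)\oplus H^{20}(F_\sigma,\Q)_{\rm van}$, so all non-Tate Hodge classes of $F_\sigma$ lie in $H^{20}_{\rm van}$. To compute these numbers I would resolve $\mathcal{O}_{F_\sigma}$ by the Koszul complex $0\to\mathcal{O}_{G(3,V_{10})}(-1)\to\mathcal{O}_{G(3,V_{10})}\to\mathcal{O}_{F_\sigma}\to0$, feed it, together with the exterior powers of the conormal sequence $0\to\mathcal{O}_{F_\sigma}(-1)\otimes\Omega^{p-1}_{F_\sigma}\to\Omega^p_{G(3,V_{10})}|_{F_\sigma}\to\Omega^p_{F_\sigma}\to0$, into the Borel--Weil--Bott theorem (which computes the cohomology of the twisted bundles $\Omega^p_{G(3,V_{10})}(t)$), and combine the result with the value of $\chi_{\rm top}(F_\sigma)$ obtained by Schubert calculus from $c(T_{F_\sigma})=c(T_{G(3,V_{10})})|_{F_\sigma}/(1+h)$. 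This should give $\dim H^{20}(F_\sigma,\Q)_{\rm van}=22$ with $h^{9,11}=h^{11,9}=1$ and $h^{10,10}_{\rm van}=20$, all other Hodge numbers vanishing. The same computation applied to the Koszul resolution of $\mathcal{O}_{Y_\sigma}$ on $G(6,V_{10})$ yields the Hodge numbers of $Y_\sigma$, in particular $\dim H^2(Y_\sigma,\Q)_{\rm van}=22$.

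Stage two. First, $\Phi$ sends classes pulled back from $G(3,V_{10})$ to classes pulled back from $G(6,V_{10})$ (by base change and the projection formula for the Grassmann bundle, since $G_\sigma=\mathrm{Gr}(3,\mathcal{U}_6)\times_{G(6,V_{10})}Y_\sigma$), and $H^2(G(6,V_{10}),\Q)$ is spanned by the Plücker class; using the orthogonal-complement description of the vanishing parts, $\Phi$ therefore restricts to $\Phi_{\rm van}\colon H^{20}(F_\sigma,\Q)_{\rm van}\to H^2(Y_\sigma,\Q)_{\rm van}$. As both sides have dimension $22$, it suffices to prove $\Phi_{\rm van}$ injective, for which I would compute $\Psi\circ\Phi$ on $H^{20}(F_\sigma,\Q)_{\rm van}$, where $\Psi\colon H^2(Y_\sigma,\Q)\to H^{20}(F_\sigma,\Q)$, $\Psi(\beta)=\pi_{F*}\!\big(\pi_Y^*(h_Y^2)\cup\pi_Y^*\beta\big)$, is the transpose correspondence twisted by the square of the Plücker class $h_Y$ of $Y_\sigma$ (inserted to match degrees). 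By base change along $\pi_Y$, this composite is the self-correspondence of $F_\sigma$ obtained by pushing forward, via the two tautological maps to $F_\sigma$, a natural class on the fiber product $G_\sigma\times_{Y_\sigma}G_\sigma$. Excess intersection along the diagonal $G_\sigma\hookrightarrow G_\sigma\times_{Y_\sigma}G_\sigma$, whose normal bundle is the relative tangent bundle $\mathcal{U}_3^*\otimes(\mathcal{U}_6/\mathcal{U}_3)$ of $\pi_Y$ (with $\mathcal{U}_3\subset\mathcal{U}_6$ the tautological flag on $G_\sigma$), splits this class into a principal term --- the integral over the $G(3,6)$-fibers of a universal polynomial in the Chern classes of $\mathcal{U}_3$ and $\mathcal{U}_6$ built from $c_9\big(\mathcal{U}_3^*\otimes(\mathcal{U}_6/\mathcal{U}_3)\big)$ and $h_Y^2$ --- plus a remainder made up of classes pulled back from $G(3,V_{10})$. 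The remainder annihilates $H^{20}(F_\sigma,\Q)_{\rm van}$, while the principal term acts on it as multiplication by a Schubert number; once that number is checked to be nonzero, $\Phi_{\rm van}$ is injective, hence an isomorphism of Hodge structures, and it transports the Hodge numbers of Stage one onto $H^2(Y_\sigma,\Q)_{\rm van}$.

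The main obstacle is this last step: identifying the remainder with a combination of ambient classes (so that it kills vanishing cohomology) and evaluating the principal Chern-class coefficient to confirm it does not vanish. If one prefers not to compute the Hodge numbers of $Y_\sigma$ directly, surjectivity of $\Phi_{\rm van}$ can instead be obtained by the mirror computation of $\Phi\circ\Psi'$ on $H^2(Y_\sigma,\Q)_{\rm van}$, using the Leray--Hirsch theorem for $\pi_Y$ to see that $\pi_{Y*}$ is surjective in degree $20$; but in either case the intersection theory on $G_\sigma\times_{Y_\sigma}G_\sigma$ is where the genuine work lies.
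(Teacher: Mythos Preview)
Your strategy is coherent but diverges substantially from the paper's in both stages. For the Hodge numbers of $F_\sigma$, the paper uses Griffiths' residue description of the Hodge filtration on a hyperplane section (after a short Schubert lemma showing that $H^{20}(G(3,V_{10}),\Q)$ restricts to zero on the complement $U$), which yields $h^{11,9}=1$ and $h^{10,10}_{\rm van}=20$ almost immediately as quotients of $H^0(\cO_{G(3,V_{10})})$ and $H^0(\cO_{G(3,V_{10})}(1))$ by a Jacobian ideal; your conormal/Koszul/Bott route would reach the same numbers but is heavier. The paper also never computes $b_2(Y_\sigma)$ directly: it obtains $b_2=23$ from Guan's upper bound for hyper-K\"ahler fourfolds, \emph{after} injectivity of $\Phi_{\rm van}$ is established. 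The sharper divergence is in proving that injectivity. You propose the Beauville--Donagi template: compute $\Psi\circ\Phi$ by excess intersection on $G_\sigma\times_{Y_\sigma}G_\sigma$ and check a Schubert number is nonzero. The paper instead shows, by a Noether--Lefschetz monodromy argument, that $H^{20}(F_\sigma,\Q)_{\rm van}$ is a \emph{simple} Hodge structure for very general $\sigma$, so $\Phi_{\rm van}$ is either zero or injective; it then rules out zero by specializing $\sigma$ so that $Y_\sigma$ contains two explicit Pl\"ucker lines (arising from flags $V_4\subset V_7\subset V_{10}$) and computing that the resulting $10$-cycles $q_*p^*[C]$, $q_*p^*[C']$ in $F_\sigma$ have intersection number $0$ in one configuration and $1$ in another, forcing $\rk(q_*p^*)\ge 2$. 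This completely sidesteps the obstacle you yourself flag: no Chern coefficient has to be evaluated and no argument is needed that the off-diagonal contribution from $G_\sigma\times_{Y_\sigma}G_\sigma$ is ambient. Your approach, if carried through, would be more self-contained and produce an explicit constant; but that off-diagonal piece is a genuine flag bundle over $Y_\sigma$ rather than the simple incidence divisor one meets for cubic fourfolds, and identifying its pushforward to $F_\sigma\times F_\sigma$ with an ambient class is precisely the step you have not done.
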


As a consequence, we conclude that $Y_\sigma$ is an irreducible  hyper-K\"ahler fourfold with second Betti number
$23$.
Although the construction of $Y_\sigma$ allows us to construct explicit  hypersurfaces
in the moduli space where its Picard number jumps to $2$ (see sections \ref{section1}, \ref{section2}, and
\ref{section5}), we have not been able to identify
an explicit hypersurface in the moduli space  where $Y_\sigma$ is isomorphic to the second punctual Hilbert scheme
of a K3 surface.
We prove however that $Y_\sigma$ is a deformation of such a  Hilbert scheme.

\begin{theo}\label{th1}
 The varieties $Y_\sigma$,  endowed  with the Pl\"ucker
 polarization, are deformation-equivalent to the second punctual Hilbert scheme $S^{[2]}$ of a K3 surface $S$ of degree 22, endowed with
 the polarization  whose pull-back to $\widetilde{S\times S}$ is $(\cO_S(1)\boxtimes \cO_S(1))^{10}(-33 \widetilde  E)$.
\end{theo}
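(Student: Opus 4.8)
\emph{Plan of proof.} First I would fix the moduli-theoretic framework. By Theorem~1, for every $\sigma$ in the Zariski-open set $U\subset\P(\bigwedge^3V_{10}^*)$ over which $Y_\sigma$ and $F_\sigma$ are smooth of the expected dimension, $Y_\sigma$ is an irreducible hyper-K\"ahler fourfold with $b_2=23$ carrying the restriction $h$ of the Pl\"ucker class as a polarization; one checks $U\neq\vide$ by exhibiting a single $\sigma$ satisfying the hypotheses of Theorem~1 (or by a Bertini argument). Since $\dim\P(\bigwedge^3V_{10}^*)-\dim\PGL(V_{10})=119-99=20$, the assignment $\sigma\mapsto(Y_\sigma,h)$ gives a $20$-dimensional family of polarized hyper-K\"ahler fourfolds over the smooth connected base $U/\PGL(V_{10})$. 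The strategy is: (i) show that the Beauville--Bogomolov lattice and the polarization class match those of $(S^{[2]},10L-33\delta)$, where $S$ is a K3 surface with $L^2=22$ and $10L-33\delta$ denotes the class on $S^{[2]}$ pulling back to $(\cO_S(1)\boxtimes\cO_S(1))^{10}(-33\widetilde E)$ on $\widetilde{S\times S}$; (ii) show that our family is complete, hence dominates a connected component of the relevant polarized moduli space; (iii) identify that component with the $K3^{[2]}$-type one.

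For step (i): using the Fujiki relation $\int_{Y_\sigma}\alpha^4=c\,q_{Y_\sigma}(\alpha)^2$ for a constant $c$ (which one computes to be $3$), together with the Hodge-theoretic isomorphism of Theorem~1 and the topology of the hypersurface $F_\sigma\subset G(3,V_{10})$, I would determine $\big(H^2(Y_\sigma,\Z),q_{Y_\sigma}\big)$: it is an even lattice of signature $(3,20)$ whose discriminant form one reads off from the integral cohomology of $F_\sigma$, so it lies in the genus of $U^{\oplus3}\oplus E_8(-1)^{\oplus2}\oplus\langle-2\rangle$; since that genus has class number one (Nikulin), the two are isometric. A Schubert-calculus computation of $\int_{Y_\sigma}h^4=\deg\big(Y_\sigma\subset\P(\bigwedge^6V_{10})\big)$ then yields $q_{Y_\sigma}(h)=22$, and a further check shows $h$ has divisibility $2$. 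On the other side $q(10L-33\delta)=100\cdot22+33^2\cdot(-2)=22$, also of divisibility $2$. Hence both families have period maps to the same quotient $\cD/\Gamma$ of a period domain.

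Step (ii) is the technical core: I would prove that the differential of the period map $U/\PGL(V_{10})\to\cD/\Gamma$ has maximal rank at a general point, both sides being $20$-dimensional. Through the correspondence of Theorem~1 this reduces to showing that the infinitesimal variation of Hodge structure of $H^{20}(F_\sigma,\Q)_{\rm van}$ (which has $h^{9,11}=1$) is non-degenerate, i.e. that the Griffiths cup-product map from first-order deformations of $F_\sigma$ inside $G(3,V_{10})$ is injective. Developing a Griffiths-type residue calculus for hypersurfaces in $G(3,V_{10})$, this in turn reduces to the surjectivity of an explicit multiplication map in a Jacobian-type ring attached to $F_\sigma$, which I would establish for general $\sigma$ by semicontinuity from one explicitly checked case. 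Local Torelli for polarized hyper-K\"ahler manifolds then shows that the image of the period map is open, so $(Y_\sigma)_{\sigma\in U}$ dominates a connected component $M$ of the moduli space of polarized hyper-K\"ahler fourfolds of the deformation type of $Y_\sigma$.

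For step (iii) — which I expect to be the main obstacle — one must show that $M$ is the component of $(S^{[2]},10L-33\delta)$. Unlike the cubic fourfold case there is no evident ``Pfaffian'' locus where $Y_\sigma$ is literally a Hilbert scheme, so I would argue indirectly: either exhibit one degenerate member $Y_{\sigma_0}$ (or a bimeromorphic model of it) that is recognizably of $K3^{[2]}$-type with the prescribed polarization and propagate this via Huybrechts' theorem that bimeromorphic hyper-K\"ahler manifolds are deformation-equivalent, using the connectedness of $U$ and the local conclusion of step (ii); or invoke that, by global Torelli (Verbitsky, Markman), the $K3^{[2]}$-type polarized moduli space of this polarization type dominates $\cD/\Gamma$ with generically connected fibres, which together with step (ii) forces $M$ to coincide with that component. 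The secondary difficulty is the Jacobian-ring surjectivity of step (ii), which requires the full strength of the residue calculus on the Grassmannian.
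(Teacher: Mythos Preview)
Your outline takes a quite different route from the paper, and the gap lies exactly where you anticipate it, in step (iii).

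The second alternative you offer there --- global Torelli \`a la Verbitsky--Markman --- does not close the argument. Those results say that the period map from the marked moduli space \emph{of a fixed deformation type} is a local isomorphism (or generically injective) onto the period domain; they do \emph{not} say that two hyper-K\"ahler manifolds with isometric $(H^2,q)$ and matching polarization class are deformation-equivalent. Nothing known excludes a priori a new deformation type of hyper-K\"ahler fourfolds with $b_2=23$ whose $H^2$-lattice happens to be $U^{\oplus3}\oplus E_8(-1)^{\oplus2}\oplus\langle-2\rangle$. So steps (i) and (ii), even if carried out perfectly, only show that your family and the $K3^{[2]}$-family hit the same period domain, not that they coincide. (There is also a smaller issue in step (i): the isomorphism of Theorem~1 is over $\Q$, so reading off the integral discriminant form of $H^2(Y_\sigma,\Z)$ from the topology of $F_\sigma$ would need a separate integral argument.)

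That leaves your first alternative for (iii): exhibit a degenerate $Y_{\sigma_0}$ birational to some $S^{[2]}$. This is precisely what the paper does, and it is the entire content of the proof --- your steps (i) and (ii) are not used. Concretely: one specializes $\sigma$ to a general point of the dual hypersurface $G(3,V_{10})^*$, so that $F_\sigma$ acquires a single node at $[W]\in G(3,V_{10})$. For such $\sigma$ the fourfold $Y_\sigma$ is normal but singular, and the locus $Y_\sigma^3\subset Y_\sigma$ of $[W_6]$ with $W\subset W_6$ is a genus-$12$ K3 surface $S$ (Mukai's model in $G(3,7)$). One then writes down an explicit birational map $\phi:S^{[2]}\dashrightarrow Y_\sigma$ and computes $r^*\phi^*\cO_{Y_\sigma}(1)\cong(\cO_S(1)\boxtimes\cO_S(1))^{10}(-33\widetilde E)$ by analyzing the degeneracy locus of the natural map $\cF_6\to\phi^*\cE_6$.

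The subtlety you should be aware of is that Huybrechts' theorem, as stated, does \emph{not} apply here: the birational partner $Y_\sigma$ is singular, so one cannot simply say ``birational $\Rightarrow$ deformation-equivalent''. The paper proves a variant: if $\phi:X\dashrightarrow Y$ is birational with $X$ irreducible hyper-K\"ahler, $Y$ normal and a flat projective degeneration of irreducible hyper-K\"ahler manifolds, $H$ ample on $Y$, and the Hilbert polynomials of $L=\phi^*H$ and $H$ agree, then a small deformation of $(X,L)$ is isomorphic to a small smooth deformation of $(Y,H)$. One checks the Hilbert-polynomial hypothesis by a direct Riemann--Roch computation on both sides, and finally shows (via Koszul and Bott vanishing on $G(6,V_{10})$) that every small deformation of $(Y_\sigma,\cO_{Y_\sigma}(1))$ comes from deforming $\sigma$.
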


Here, $\widetilde{S\times S}\to S\times S$ is the blow-up of the diagonal and $\widetilde E$ is the exceptional divisor; there is a canonical double cover $\widetilde{S\times S}\to S^{[2]}$.
The proof of this result  is closely related to that of the main result
of \cite{huy}, where Huybrechts  proved that birational equivalence implies deformation equivalence for
irreducible hyper-K\"ahler manifolds. However, we are in a situation where only a singular degeneration
of $Y_\sigma$ is birationally equivalent to  the second punctual Hilbert scheme of a K3 surface, to which we cannot apply directly Huybrechts' theorem.

To close this introduction, we would like to explain how our construction fits into the general results of  \cite{ghs} on the Kodaira dimension of certain modular varieties dominated by moduli spaces of hyper-K\"ahler manifolds (we would like to thank Hulek for pointing this out to us). We quickly review some of the relevant results of \cite{ghs}.

Let $S $ be a K3 surface  and let 
  $L$ be the  rank-23 lattice $H^2(S^{[2]},\Z)$, equipped with the Beauville-Bogomolov quadratic form $q$ (\cite{be}). Depending on the positive integer $d$, there are, under the action of the stable orthogonal group of $L$, either one or two orbits of primitive vectors $ h$ of $L$ with $q(h)=2d$: one is called {\em of split type} and the other {\em of nonsplit type} (it occurs if and only if $d\equiv -1 \pmod 4$).  Polarized hyper-K\"ahler manifolds  which are deformation equivalent to   $(S^{[2]},h)$ admit a quasi-projective coarse moduli space $\cM_h$ which is  finite over a dense open subset of a  locally symmetric modular variety $\cS_h$. When $h$ is of split type, it is proved in \cite{ghs} that $\cS_h$ (hence also $\cM_h$) is of general type for $d\ge 12$ and of nonnegative Kodaira dimension  for $d=9$ or $11$. 
  
  On the other hand, for the polarization $h$ mentioned in Theorem \ref{th1}, we have
  $$ q(h)  =100 \times 22 + (33)^2 \times(
-2)= 22,$$
hence $d=11$.  Furthermore, $h$ is of nonsplit type, and our construction proves that $\cM_h$ (hence also $\cS_h$) is unirational. 
%Similarly, the Beauville-Donagi construction  (resp. the Iliev-Ranestad construction) (resp. the O'Grady construction) proves that when $d=3$ (resp. $d=19$) (resp. $d=1$) and $h$ is of nonsplit type, $\cM_h$ is unirational.

\begin{rema} Part of the results of this paper (and particularly those concerning
the Hodge theory of the hypersurface $F_\sigma$) are  related to those of \cite{mmk}, where
hypersurfaces or complete intersections in homogeneous varieties with a Hodge structure
on middle cohomology of $3$-dimensional Calabi-Yau type
are exhibited and studied.
\end{rema}

\medskip

{\small\noindent{\bf Acknowledgements.}  This work was started at the MSRI during the algebraic geometry program of spring 2009. We thank
the organizers of this semester and the MSRI  for support and  the excellent environment provided during this period.
We also thank Christian Peskine for an inspiring discussion which gave us the starting point of this work, Daniel Grayson and Michael Stillman for their help with the program Macaulay2, Fr\'{e}d\'{e}ric Han for doing calculations for us with the program LiE, Laurent Manivel for his virtuosity with Bott's theorem, and Klaus Hulek for bringing to our attention the link between our construction and the results of  \cite{ghs}.}

\medskip

\noindent{\bf Notation.}  If $V$ is a complex vector space, we denote by $G(d,V)$ the Grassmannian of vector subspaces of $V$ of dimension $d$, by $\cS_d$   the rank-$d$ tautological vector subbundle on $G(d,V)$, and by $\cE_d$ its dual.

\section{The hypersurface $F_\sigma$ and the fourfold $Y_\sigma$}\label{section1}

Let $V_{10}$ be a (complex) vector space of dimension 10 and let $\sigma$ be a general element in $ \bigwedge^3V_{10}^*$.
The $3$-form $\sigma$ determines a Pl\" ucker hyperplane section
$$F_\sigma\subset G(3,V_{10})\subset \P( \bigwedge^3V_{10})$$
consisting of $3$-dimensional vector subspaces of $V_{10}$ on which $\sigma$ vanishes.

On the other hand, $\sigma$ determines a subvariety
$$Y_\sigma\subset G(6,V_{10})$$ defined as the set of
$6$-dimensional vector subspaces of $  V_{10}$ on which $\sigma$ vanishes identically.
It is the zero-set of a general section of
$\bigwedge^3\cE_6$.
As $\cE_6$ is generated by global sections,
$Y_\sigma$ is smooth and connected, of codimension $\rk(\bigwedge^3\cE_6)=20$.

We denote by $\cO_{G(6,V_{10})}(1)=\det(\cE_6)$ the Pl\" ucker line bundle on
$G(6,V_{10})$.
As $\omega_{G(6,V_{10})}=\cO_{G(6,V_{10})}(-10)$ and $\det( \bigwedge^3\cE_6)=\cO_{G(6,V_{10})}(10)$,
we conclude by adjunction that $Y_\sigma$ is a smooth fourfold with trivial canonical bundle.

Next we observe that there is a natural correspondence
between $F_\sigma$ and $Y_\sigma$. Namely,
each point of $Y_\sigma$ determines a $6$-dimensional vector subspace
$W_6\subset V_{10}$ on which
$\sigma$ vanishes identically, hence an inclusion
$G(3,W_6)\subset F_\sigma$.
Putting this together in a family gives us
a variety
$$G_\sigma=\{([W_3],[W_6])\in G(3,V_{10})\times G(6,V_{10}) \mid W_3\subset W_6,\ \sigma\vert_{W_6}=0\},$$
with two projections
\begin{equation}\label{Gsigma}
Y_\sigma\stackrel{q}{\longleftarrow}G_\sigma\stackrel{p}{\lra} F_\sigma.
 \end{equation}
The fibers of $p$ are the $9$-dimensional Grassmannians
$G(3,W_6)$. There is
thus
an induced cohomological correspondence
$$q_*p^*:H^{20}(F_\sigma,\Q)\to H^2(Y_\sigma,\Q),$$
whose restriction to vanishing cohomology will be denoted by
\begin{eqnarray}
\label{pq}
(q_*p^*)_{\rm van}:H^{20}(F_\sigma,\Q)_{\rm van}\to H^2(Y_\sigma,\Q),
\end{eqnarray}
where, if we denote by $j$ the inclusion $F_\sigma\hookrightarrow G(3,V_{10})$,
$$H^{20}(F_\sigma,\Q)_{\rm van}:=\Ker \bigl (H^{20}(F_\sigma,\Q)\stackrel{j_*}{\to}H^{22}(G(3,V_{10}),\Q)\bigr).$$

Our aim in this section is to investigate the geometry of $Y_\sigma$ and of the correspondence
introduced above.
We will show the following.

\begin{theo}\label{depart} The variety $Y_\sigma$ is an irreducible hyper-K\"ahler fourfold with $b_2=23$.
\end{theo}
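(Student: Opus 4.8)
The plan is to combine the decomposition theorem of Bogomolov and Beauville \cite{be} with two cohomological computations. We have already seen that $Y_\sigma$ is a smooth connected projective fourfold with $\omega_{Y_\sigma}\isom\cO_{Y_\sigma}$; in particular it is compact K\"ahler with $c_1(Y_\sigma)_{\R}=0$, so \cite{be} provides a finite \'etale cover $\pi\colon\widetilde Y\to Y_\sigma$, say of degree $m$, that splits as a product $\widetilde Y\isom T\times\prod_j V_j\times\prod_k X_k$ of a complex torus, of Calabi--Yau manifolds (simply connected, with trivial canonical bundle and $h^{p,0}=0$ in intermediate degrees), and of irreducible hyper-K\"ahler manifolds. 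The two inputs I would feed into this are $\chi(\cO_{Y_\sigma})=3$ and $b_2(Y_\sigma)=23$.

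For the first input, I would resolve $\cO_{Y_\sigma}$ on $G(6,V_{10})$ by the Koszul complex attached to the section of $\bigwedge^3\cE_6$ that cuts out $Y_\sigma$; its terms are the homogeneous bundles $\bigwedge^i\bigl(\bigwedge^3\cS_6\bigr)$ for $0\le i\le 20$. Decomposing each of these plethysms into Schur functors of $\cS_6$ and applying Bott's theorem on $G(6,V_{10})$ to each summand, one computes $H^q(Y_\sigma,\cO_{Y_\sigma})$ from the hypercohomology spectral sequence; the outcome is that this group has dimension $1,0,1,0,1$ for $q=0,1,2,3,4$, so $\chi(\cO_{Y_\sigma})=3$ (and $h^{2,0}(Y_\sigma)=1$). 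This Bott-theoretic plethysm computation is the main technical obstacle. For the second input, Theorem~1 applies ($Y_\sigma$ and $F_\sigma$ being smooth of the expected dimension for general $\sigma$) and identifies $H^2(Y_\sigma,\Q)_{\rm van}$ with $H^{20}(F_\sigma,\Q)_{\rm van}$, which has dimension $1+1+20=22$; since $H^2\bigl(G(6,V_{10}),\Q\bigr)=\Q h$ is generated by the Pl\"ucker class and $h\vert_{Y_\sigma}$ is ample and not contained in the vanishing part, $H^2(Y_\sigma,\Q)=\Q h\oplus H^2(Y_\sigma,\Q)_{\rm van}$, whence $b_2(Y_\sigma)=23$. (Alternatively, running the same Koszul resolution with $\Omega^p_{G(6,V_{10})}$ in place of $\cO_{G(6,V_{10})}$, together with the conormal exact sequence, yields the whole Hodge diamond of $Y_\sigma$, and in particular $b_2=23$, directly.)

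To finish, I would use that $\pi$ is \'etale, so $\chi(\cO_{\widetilde Y})=m\,\chi(\cO_{Y_\sigma})=3m$. If $\dim T\ge1$ then $\chi(\cO_{\widetilde Y})=0$, which is impossible; hence $\widetilde Y$ is a product of simply connected factors of dimension $\ge2$ whose dimensions add up to $4$, so $\widetilde Y$ is a product of two K3 surfaces ($\chi(\cO)=4$), a Calabi--Yau fourfold ($\chi(\cO)=2$), or an irreducible hyper-K\"ahler fourfold ($\chi(\cO)=3$). Only the last is compatible with $\chi(\cO_{\widetilde Y})=3m$, which forces $m=1$ and $\widetilde Y=Y_\sigma$; thus $Y_\sigma$ is itself an irreducible hyper-K\"ahler fourfold, its holomorphic symplectic form being the (automatically nowhere degenerate) generator of $H^{2,0}(Y_\sigma)$. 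Together with $b_2(Y_\sigma)=23$, this is the assertion.
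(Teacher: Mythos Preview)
Your route to irreducibility via $\chi(\cO_{Y_\sigma})=3$ and the Bogomolov--Beauville decomposition is correct, and the case analysis (the holomorphic Euler characteristic of a product of two K3 surfaces, a strict Calabi--Yau fourfold, or an irreducible hyper-K\"ahler fourfold being $4$, $2$, $3$ respectively, forcing $m=1$) is clean. The paper in fact acknowledges exactly this shortcut in Remark~\ref{manhan}, but chooses a different argument for its main proof: it uses Theorem~\ref{cohFsigma} to embed a simple weight-$2$ Hodge structure with $h^{1,1}=20$ into $H^2(Y_\sigma,\Q)$, and then observes that no finite \'etale cover which is an abelian fourfold, a product of an abelian surface and a K3, or a product of two K3 surfaces can carry such a substructure in its $H^2$, since the transcendental part of $H^2$ of any of these has $h^{1,1}\le 19$. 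Your approach is more self-contained; the paper's is more geometric and, as they note, explains where the holomorphic $2$-form comes from.

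Your derivation of $b_2=23$, however, is circular as written. You invoke the isomorphism $H^{20}(F_\sigma,\Q)_{\rm van}\isom H^2(Y_\sigma,\Q)_{\rm van}$ announced in the introduction, but in the paper this is Corollary~\ref{cor6avril}, proved \emph{after} Theorem~\ref{depart} and using $b_2(Y_\sigma)=23$ as input. What is actually established before Theorem~\ref{depart} is only the \emph{injectivity} of $(q_*p^*)_{\rm van}$ into $H^2(Y_\sigma,\Q)$ (Theorem~\ref{cohFsigma}, part 3); combined with the fact that for very general $\sigma$ the image contains no Hodge class while the Pl\"ucker class is one, this yields only $b_2\ge 23$. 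Nor is the splitting $H^2(Y_\sigma,\Q)=\Q h\oplus H^2(Y_\sigma,\Q)_{\rm van}$ automatic: here $H^2_{\rm van}=\Ker i_{\sigma*}$ with target $H^{42}(G(6,V_{10}),\Q)$ of dimension $3$, and the assertion that $i_{\sigma*}$ has rank~$1$ is itself part of Corollary~\ref{cor6avril}. The paper obtains the missing upper bound from Guan's theorem \cite{guan} ($b_2\le 23$ for any irreducible hyper-K\"ahler fourfold), which you may apply once irreducibility is in hand. Your parenthetical alternative---computing the full Hodge diamond by Koszul and Bott---would also close the gap, but then you must actually carry it out rather than fall back on the correspondence.
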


This means by  definition that
$Y_\sigma$ has an everywhere  nondegenerate holomorphic $2$-form, unique up to multiplication by a nonzero scalar,
and, because $Y_\sigma$ has trivial canonical bundle, this is equivalent by \cite{be} and \cite{bo}
to
$h^{2,0}(Y_\sigma)\ne 0$ and no finite cover of $Y_\sigma$ is a product of
two algebraic K3 surfaces, or the product of an abelian surface by an
 algebraic K3 surface, or an abelian fourfold.

The first  step in the proof is the following result concerning the geometry
 of $F_\sigma$.

\begin{theo} \label{cohFsigma} 1) The only nonzero Hodge numbers of the Hodge structure on $H^{20}(F_\sigma,\Q)_{\rm van}$ are
$$h^{9,11}(F_\sigma)=h^{11,9}(F_\sigma)=1\quad{\it and}\quad h^{10,10}(F_\sigma)_{\rm van}=20.$$

2) For $\sigma$ very general,
the Hodge structure on $H^{20}(F_\sigma,\Q)_{\rm van}$ is simple.

 3) The morphism of Hodge structures $(q_*p^*)_{\rm van}$ in (\ref{pq}) is injective.
\end{theo}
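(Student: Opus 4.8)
The plan is to compute the Hodge numbers of $H^{20}(F_\sigma,\Q)$ via the Griffiths–Steenbrink–Green-type description of cohomology of an ample hypersurface in a smooth projective variety — here $F_\sigma$ is a smooth ample divisor in $G:=G(3,V_{10})$, which has dimension $21$, so $F_\sigma$ has dimension $20$ and we care about its middle cohomology. By the Lefschetz hyperplane theorem, $H^k(F_\sigma,\Q)\isom H^k(G,\Q)$ for $k<20$, and by hard Lefschetz/Poincaré duality the only interesting piece is $H^{20}(F_\sigma,\Q)_{\rm van}=\coker\bigl(H^{18}(G,\Q)\to H^{20}(F_\sigma,\Q)\bigr)$, which is dual to the kernel defining $H^{20}_{\rm van}$ in the excerpt. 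Concretely I would use the exact sequence coming from the normal bundle $\cO_G(F_\sigma)=\cO_G(1)$ and the residue/Griffiths description: the Hodge-graded pieces of the vanishing cohomology are read off from cohomology of the complex of bundles $\Omega^p_G\otimes\cO_G(m)$ built from the Koszul resolution of $\cO_{F_\sigma}$ tensored with $\Omega^\bullet_G(\log F_\sigma)$. Since $F_\sigma$ is cut out by one section of an ample line bundle, $H^q(F_\sigma,\Omega^p_{F_\sigma})_{\rm van}$ is governed by $H^\bullet(G,\Omega^\bullet_G(1))$ and $H^\bullet(G,\Omega^\bullet_G)$, and all of these are computable by the Borel–Weil–Bott theorem on the homogeneous space $G(3,V_{10})$. (This is exactly the kind of computation for which the authors thank Manivel.)

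The key steps, in order: (i) set up the Koszul/residue spectral sequence expressing $H^q(F_\sigma,\Omega^p_{F_\sigma})$ in terms of $H^{q'}(G,\Omega^{p'}_G(j))$ for $j\in\{0,1\}$ and small $p'$; (ii) invoke Bott vanishing on $G(3,V_{10})$ to kill almost all terms — the crucial input is that $\Omega^{p}_G(1)$ has very limited cohomology for $p\le 11$, and most weights land in the Bott null-chamber; (iii) identify the two surviving one-dimensional contributions, which by symmetry of the Hodge structure must sit in degrees $(9,11)$ and $(11,9)$ (the form $\sigma$ itself, a generator of $\bigwedge^3 V_{10}^*$, furnishes the $(9,11)$-class via the residue construction), together with the $20$-dimensional $(10,10)$-part; the number $20$ should match the dimension of the moduli of $\sigma$'s, i.e. $\dim\bigwedge^3V_{10}^* -\dim\PGL(V_{10})=120-100=20$, which is the expected heuristic that this vanishing cohomology is the tangent space to deformations of the pair — this gives part 1). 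For part 3), injectivity of $(q_*p^*)_{\rm van}$: I would argue via the Hodge-type decomposition, showing $(q_*p^*)$ is nonzero on the $(9,11)$-component — i.e. it sends the generating $(2,0)$-form upstairs — and then use that the source Hodge structure is irreducible (part 2) together with the fact that $q_*p^*$ is a morphism of Hodge structures, so its kernel is a sub-Hodge-structure of a simple one, hence $0$. Concretely, nonvanishing on the $(2,0)$-part can be checked by a local computation of the holomorphic $2$-form on $Y_\sigma$ pulled back along the Grassmann-bundle correspondence $G_\sigma$, or by a Macaulay2-assisted verification on one explicit $\sigma$ (the authors do thank Grayson–Stillman).

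For part 2), simplicity of the Hodge structure for very general $\sigma$: the standard tool is a monodromy argument. One considers the family of $F_\sigma$ over the parameter space $\P(\bigwedge^3V_{10}^*)^{\rm sm}$ of $\sigma$ giving smooth hypersurfaces of the expected dimension, and shows the monodromy action on $H^{20}(F_\sigma,\Q)_{\rm van}$ has no nontrivial invariant sub-Hodge-structure for very general $\sigma$; the cleanest route is to prove the monodromy group acts irreducibly on $H^{20}_{\rm van}$, using a Lefschetz-pencil/vanishing-cycle argument (the vanishing cycles of a Lefschetz pencil are conjugate under monodromy and span $H^{20}_{\rm van}$, so any invariant subspace is either $0$ or everything), or by exploiting the orthogonality properties once one knows, from Theorem \ref{depart}'s consequences, that $H^2(Y_\sigma)_{\rm van}$ carries the deformation-generic rank-$1$ Picard behaviour. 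Actually simplicity here is essentially equivalent to: the only rational $(k,k)$-classes in $H^{20}(F_\sigma)_{\rm van}$ form a group of rank (at most) the dimension of the corresponding Noether–Lefschetz locus, and for very general $\sigma$ in the $20$-dimensional family this forces no hidden endomorphisms; I would deduce it from the Hodge-number computation in part 1) (which shows the Hodge structure looks like that of a weight-$2$ structure with $h^{2,0}=1$, of ``K3 type'') plus the irreducibility of monodromy.

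The main obstacle I expect is part 1), the explicit Bott-theoretic computation on $G(3,V_{10})$: one must show that the large number of \emph{a priori} possible cohomology groups $H^{q}(G,\Omega^p_G(j))$ all vanish except for the two one-dimensional ones and the $20$-dimensional $(10,10)$-piece, and this requires carefully running Bott's algorithm for dominant/regular weights across all relevant $(p,q,j)$ — organizationally delicate even though each individual step is mechanical. The injectivity statement in part 3) is, by contrast, comparatively soft once parts 1) and 2) are in hand, reducing to the single assertion that the correspondence does not annihilate the holomorphic $2$-form.
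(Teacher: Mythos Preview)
Your plan for parts 1) and 2) is essentially that of the paper, though for 1) the paper's version is more streamlined than the Koszul spectral sequence you sketch: once the Bott vanishing $H^i(G,\Omega^j_G(k))=0$ for $i,k>0$ is in hand, Griffiths' residue description gives $F^pH^{20}(F_\sigma,\C)_{\rm van}$ directly as residues $\Res_{F_\sigma}(\alpha/\sigma^{21-p})$ with $\alpha\in H^0(G,\omega_G(21-p))=H^0(G,\cO_G(11-p))$, and the Jacobian-ideal analysis then yields $h^{10,10}_{\rm van}=\dim H^0(F_\sigma,\cO_{F_\sigma}(1))-\dim H^0(G,T_G)=119-99=20$. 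For 2) the paper invokes exactly the Noether--Lefschetz/monodromy argument you describe. (Your aside about using consequences of Theorem~\ref{depart} would be circular: that theorem is proved \emph{via} the present one.)

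For part 3), your reduction ``nonzero plus simple implies injective'' matches the paper's first sentence. The substantive difference is in how nonvanishing is established. Your suggestion of checking that the $(11,9)$-class lands on a nonzero holomorphic $2$-form on $Y_\sigma$ is reasonable in outline, but at this point one does not yet know $h^{2,0}(Y_\sigma)\ne0$ --- that is a \emph{consequence} of the theorem --- so the ``local computation'' you propose would have to produce the $2$-form from scratch by integrating the residue form over the $9$-dimensional fibers of $q$, and you give no indication of how to carry this out; nor does the paper use Macaulay2 here. Instead the paper argues on homology: it specializes $\sigma$ to two explicit configurations in which $Y_\sigma$ contains a Pl\"ucker line $C$, describes the $10$-cycle $Z=q(p^{-1}(C))\subset F_\sigma$ concretely as a Schubert-type locus, and shows (Lemma~\ref{petitlemme}) that for two such lines the intersection number $z\cdot z'$ is $0$ in one configuration and $1$ in the other. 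Since these classes are monodromy-conjugate, $q_*p^*$ must have rank $\ge2$ on $H_2(Y_\sigma,\Q)$, hence $(q_*p^*)_{\rm van}\ne0$. This explicit cycle-theoretic construction is the real content of the proof of 3), and your proposal does not anticipate it.
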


\begin{proof}
1)  This is an immediate consequence of Griffiths' description of the Hodge structure on the
vanishing cohomology of an ample hypersurface (see \cite{grirat} and \cite{voisinbook}, 6.1.2).
Let $U:=G(3,V_{10})\moins F_\sigma$. We have first of all the following.

\begin{lemm}
 The restriction map
$$H^{20}(G(3,V_{10}),\Q)\to H^{20}(U,\Q)$$
is zero.
\end{lemm}

\begin{proof}
The cohomology of $G(3,V_{10})$ is generated as an algebra by the classes
$\ell=c_1(\cS_3)$, $c_2=c_2(\cS_3)$, and $  c_3=c_3(\cS_3)$,
where $\ell$ is proportional to the class of $F_\sigma$, hence vanishes on $U$.
On the other hand, consider the
projective bundle $\P(\cS_3)$ on $G(3,V_{10})$. It admits a natural
map $\alpha$ to $\P(V_{10})$ and its cohomology is generated
by $h=\alpha^*c_1(\cO_{\P(V_{10})}(1))$ as an algebra over
$H^\bullet(G(3,V_{10}),\Q)$, with the sole relation
$$h^3+h^2\ell +hc_2+c_3=0.$$
Modulo $\ell$, hence  in  $H^\bullet(U)$, this relation becomes
\begin{eqnarray*}%\label{classic}
h^3+hc_2+c_3=0.
\end{eqnarray*}
Together with the vanishing $h^{10}=0$, this yields
% and put in it the relation (\ref{classic}). We obtain
the following
equalities in $H^\bullet(U,\Q)$:
$$c_2^4=3c_2c_3^3,\  c_3^3=4c_3c_2^3,\ c_2^2c_3^2=0.$$
But the only polynomials of weighted degree $10$ in $c_2$ and $c_3$
are
$c_2^5$ and $c_2^2c_3^2$, and they vanish by the relations above.
\end{proof}

This lemma and the Thom exact sequence (\cite{voisinbook}, 6.1.1)  show that the residue
map is an isomorphism
$$H^{21}(U,\Q)\isom H^{20}(F_\sigma,\Q)_{\rm van}.$$
Now we apply Griffiths' theory (\cite{grirat}; see also \cite{voisinbook}, 6.1.2), which describes the Hodge filtration
on the cohomology $H^{21}(U,\Q)$ (which up to a shift of $-1$
corresponds to the
Hodge filtration on $H^{20}(F_\sigma,\Q)_{\rm van}$).

The only assumption we need is the vanishing
$$ H^i(G(3,V_{10}),\Omega_{G(3,V_{10})}^j(k))=0\quad{\rm for\ all\ } k>0,\ i>0,\ j\ge 0,$$
which we get from Bott's theorem. % (\cite{manivel}???).
It   follows that
$$F^pH^{20}(F_\sigma,\C)_{\rm van}=F^{p+1}H^{21}(U,\C)$$
is generated by residues
$$\Res_{F_\sigma}\frac{\alpha}{\sigma^{21-p}},$$
where $\alpha$ runs through the space of  sections of $\omega_{G(3,V_{10})}(21-p)=\cO_{G(3,V_{10})}(11-p)$.
We immediately get the vanishing of $F^{12}H^{20}(F_\sigma,\C)_{\rm van}$, hence of
$h^{p,20-p}(F_\sigma,\C)_{\rm van}$ for $p\ge 12$.

For $p=11$, we get a $1$-dimensional vector space
generated by $\Res_{F_\sigma}\frac{\alpha}{\sigma^{10}}$,
where $\alpha$ is a nowhere vanishing section of $\omega_{G(3,V_{10})}(10)=\cO_{G(3,V_{10})}$.
For $p=10$, we find that
$H^{10,10}(F_\sigma,\C)_{\rm van}$ is generated by the residues
$$\Res_{F_\sigma}\frac{\alpha}{\sigma^{11}},$$
where $\alpha$ runs through the space of  sections  of $\omega_{G(3,V_{10})}(11)=\cO_{G(3,V_{10})}(1)$.
Finally, we recall the analysis (adapted from \cite{grirat}; see also \cite{voisinbook},
6.1.3, where the case of hypersurfaces in a projective space is treated) of the
kernel of the maps
\begin{eqnarray*}
H^0(G(3,V_{10}),\cO_{G(3,V_{10})})\isom
H^0(F_\sigma,\cO_{F_\sigma})&\to&  H^{11,9}(F_\sigma)\\
\alpha&\mapsto& \Res_{F_\sigma}\frac{\alpha}{\sigma^{10}}
\end{eqnarray*}
and
\begin{eqnarray*}
 H^0(G(3,V_{10}),\cO_{G(3,V_{10})}(1))/\C\sigma\isom
H^0(F_\sigma,\cO_{F_\sigma}(1))&\to& H^{10,10}(F_\sigma)\\
 \alpha\ \ \mapsto\ \ \Res_{F_\sigma}\frac{\alpha}{\sigma^{11}}  \pmod{H^{11,9}(F_\sigma)}
\end{eqnarray*}
 induced by the residue maps.
The same analysis as in the case of hypersurfaces in a projective space shows that the kernels
are Jacobian ideals obtained respectively from sections of $T_{G(3,V_{10})}(-1)$ and sections of $T_{G(3,V_{10})}$
via the natural maps
$$H^0(G(3,V_{10}),T_{G(3,V_{10})}(l))\to H^0(F_\sigma,\cO_{F_\sigma}(l+1)),$$
for $l\in\{-1,0\}$, induced by the normal bundle exact sequence of $F_\sigma$.

Now $H^0(G(3,V_{10}),T_{G(3,V_{10})}(-1))=0$, whereas the vector space\break $H^0(G(3,V_{10}),T_{G(3,V_{10})})$ has dimension
$99$ and injects into $H^0(F_\sigma,\cO_{F_\sigma}(1))$.
Hence we conclude
$h^{10,10}(F_\sigma)_{\rm van}=119-99=20$.

\medskip

2) The simplicity of a polarized Hodge structure of weight $20$ with Hodge numbers
$h^{11,9}=1$, and $h^{i,20-i}=0$ for $i>11$, is equivalent to the fact that there
are no Hodge classes in $H^{10,10}$ (here we use the polarization to say that  any nontrivial
 Hodge substructure has $h^{11,9}=0$, hence consists of Hodge classes, or
its orthogonal complement has $h^{11,9}=0$, hence consists of Hodge classes).
So it suffices to prove that for $\sigma$ very general, there are no Hodge classes
in $H^{20}(F_\sigma,\Q)_{\rm van}$. This is a Noether-Lefschetz type theorem which is proved by
the classical Lefschetz  monodromy argument (see \cite{voisinbook}, 3.2.3).

\medskip

3) By simplicity, the morphism of Hodge structures $(q_*p^*)_{\rm van}$ is either $0$ or injective.
It thus suffices to prove that it is not   $0$.
Equivalently, it  suffices to prove  that the morphism
$$q_*p^*:H_2(Y_\sigma,\Q)\to H_{20}(F_\sigma,\Q)$$
has rank at least $2$. Indeed, since $H_2(G(6,V_{10}),\Q)$ has dimension $1$,
denoting by $i_\sigma:Y_\sigma\to G(6,V_{10})$ the inclusion,
we find that
$q_*p^* $ has rank at least $ 2$ if and only if its restriction
$$q_*p^*\vert_{ \Ker i_{\sigma*}}:
%H_2(Y_\sigma,\Q)\supset
\Ker i_{\sigma*}\to H_{20}(F_\sigma,\Q)$$ has rank at least
$1$. But this   morphism takes its values in $H_{20}(F_\sigma,\Q)_{\rm van}$ and its
dual is the morphism $(q_*p^*)_{\rm van}$   composed with the inclusion of $(\Ker i_{\sigma*})^*$ into $H^2(Y_\sigma,\Q) $.

We make now the following construction. Consider
subspaces $V_4\subset V_7\subset V_{10}$, where the subscripts
indicate the dimension, and choose  $\sigma\in \bigwedge^3V_{10}^*$ satisfying
$$\sigma\vert_{V_7}=\alpha_1\wedge\alpha_2\wedge\alpha_3\quad {\rm and}\quad V_4=\{\alpha_1=\alpha_2=\alpha_3=0\}.$$
One verifies that one can choose such a $\sigma$ keeping $Y_\sigma$ and $F_\sigma$ smooth.

In this situation, $Y_\sigma$ contains a line (with respect to the Pl\"ucker embedding);
namely, choosing any $V_5$ such that
$V_4\subset V_5\subset V_7$, and observing that $\sigma $ vanishes on any hyperplane of
$V_7$ containing $V_4$, we find that the line
$$C=\{ [W_6]\mid V_5\subset W_6\subset V_7\} $$
is contained in $Y_\sigma$.

Let $Z=q(p^{-1}(C))$. Observe that the class $z\in H_{20}(F_\sigma,\Q)$ of $Z$ is
equal to $q_*p^*c$, where $c$ is the class of $C$. Furthermore, the classes so obtained
 are in the same orbit under the monodromy action.

We will now specialize $\sigma$ further in two ways, asking that $Y_\sigma$ contain
two curves $C$ and $C'$ as above (but of course     { with different cohomology classes in $Y_\sigma$}).

\medskip

A) We choose $V_4\subset V_7\subset V_{10}$ and $V'_4\subset V'_7\subset V_{10}$ in such a way
that the intersection $V_7\cap V'_7$ is transverse, and   $V_4\cap V'_4=\{0\}$.
In a suitable basis $(e_1,\ldots, e_{10})$ of $V_{10}$,
we take
$$V_7=\langle e_1,\ldots,e_7\rangle,\ V_4=\langle e_2,\ldots, e_5\rangle,\ V'_7=\langle e_4,\ldots, e_{10}\rangle,\ V'_4=\langle e_6,\ldots, e_9\rangle.$$
Then,
$$\sigma\vert_{V_7}=e_1^*\wedge e_6^*\wedge e_7^*\quad {\rm and}\quad \sigma\vert_{V'_7}= e_4^*\wedge e_5^*\wedge e_{10}^*,$$
and this is compatible, because on the intersection
$$V_7\cap V'_7=\langle e_4,\ldots, e_7\rangle,$$
the two 3-forms $e_1^*\wedge e_6^*\wedge e_7^*$ and $e_4^*\wedge e_5^*\wedge e_{10}^*$ vanish.
One verifies that for a general choice of $\sigma$ as above, $Y_\sigma$ and $F_\sigma$ are smooth.

\medskip

B) We choose $V_4\subset V_7\subset V_{10}$ and $V'_4\subset V'_7\subset V_{10}$ in such a way
that the intersection $V_7\cap V'_7$ is transverse, but   $V_4\cap V'_4$
is $1$-dimensional.
In a suitable basis $(e_1,\ldots, e_{10})$ of $V_{10}$,
we take
\begin{eqnarray*}
&V_7=\langle e_1,\ldots,e_7\rangle,\ V_4=\langle e_1,\ldots, e_4\rangle,&\\
& V'_7=\langle e_4,\ldots,e_{10}\rangle,\ V'_4=\langle e_4,e_8, e_9,e_{10}\rangle.&
\end{eqnarray*}
Then,
$$\sigma\vert_{V_7}=e_5^*\wedge e_6^*\wedge e_7^*\quad {\rm and}\quad \sigma\vert_{V'_7}= e_5^*\wedge e_6^*\wedge e_7^*$$
are obviously compatible and we indeed have a $1$-dimensional intersection $V_4\cap V'_4$, generated by $e_4$.

One checks that for a general choice of $\sigma$ as above, $Y_\sigma$ and $F_\sigma$ are smooth.

The proof of the theorem is then concluded by the following lemma.

\begin{lemm} \label{petitlemme} The classes $z$, $z'\in H_{20}(F_\sigma,\Q)$ constructed above
satisfy
$$ z\cdot z'=0$$
in situation A), and
$$z\cdot z'=1$$
in situation B).
\end{lemm}
Indeed, if $q_*p^*$ has rank $1$, the classes $z$ and $z'$ must be proportional. As they are
in the same orbit of the monodromy action, they are equal. This contradicts the fact
that they satisfy $z\cdot z'=0$ or $z\cdot z'=1$ according to the configuration.
\end{proof}

\begin{proof}[Proof of Lemma \ref{petitlemme}.] Note that in both cases, the (singular) variety $Z$ is described
as follows:
$$Z=\{W_3\subset V_7\mid \dim (W_3\cap V_5) \ge 2\}.$$
In situation A), we may choose $V_5$ and $V'_5$   transverse, so that $V_5\cap V'_5=\{0\}$. But then,
$$Z\cap Z'=\{W_3\subset V_7\cap V'_7\mid \dim (W_3\cap V_5 )\ge 2,\ \dim ( W_3\cap V'_5) \ge 2\}$$
is clearly empty.

In situation B), we may choose $V_5$ and $V'_5$ so that they meet along the $1$-dimensional vector space
$\langle e_4\rangle=V_4\cap V'_4$. Then
$$Z\cap Z'=\{W_3\subset V_7\cap V'_7\mid \dim (W_3\cap V_5) \ge 2,\ \dim ( W_3\cap V'_5) \ge 2\},$$
and denoting by $V_{5,0}$ (resp. $V'_{5,0}$) the $2$-dimensional intersection$V_5\cap V'_7$ (resp. $V'_5\cap V_7$),
we find
$$Z\cap Z'=\{W_3\subset V_7\cap V'_7\mid \dim ( W_3\cap V_{5,0} )\ge 2,\ \dim ( W_3\cap V'_{5,0} )\ge 2\}.$$
As $V_{5,0}$ and $V'_{5,0}$ are $2$-dimensional, one must have for such a $W_3$:
$$V_{5,0}=W_3\cap V_{5,0}\quad {\rm and}\quad  V'_{5,0}=W_3\cap V'_{5,0},$$
and finally, $W_3=V_{5,0}+V'_{5,0}$.

Thus the intersection $Z\cap Z'$ consists of one point, namely
the point $[V_{5,0}+V'_{5,0}]$ of $G(3,V_{10})$, and it follows
that $z\cdot z'$ is nonzero in this case.  To prove    $z\cdot z'=1$, one notes that $Z$ and $Z'$ are smooth
at the above point, and one checks that the intersection is transverse.
\end{proof}

\begin{rema}    The hyper-K\"ahler manifolds $Y_\sigma$ containing a line as above are very similar
to the Fano varieties of lines in a cubic fourfold (\cite{bedo}) containing a plane  (\cite{vlag}). Indeed, the $V_5$ introduced in the construction
of the line $C$ varies in the plane  $\P(V_7/V_4)$. Furthermore, the subset of
$Y_\sigma$ swept out by the curves $C$   is the  dual plane $\P((V_7/V_4)^*) \subset Y_\sigma$ parametrizing
hyperplanes of $V_7$ containing $V_4$. This, as noticed in \cite{vlag}, is a Lagrangian plane in $Y_\sigma$.
\end{rema}

\begin{proof}[Proof of Theorem \ref{depart}] Theorem \ref{cohFsigma} implies  $h^{2,0}(Y_\sigma)\ne 0$.
In order to show that $Y_\sigma$ is an  irreducible hyper-K\"ahler variety, it thus suffices to show that no finite \'{e}tale cover of $Y_\sigma$
is an abelian fourfold, the product of an abelian surface and an algebraic
K3 surface,
or the product of two algebraic K3 surfaces. But this follows again from Theorem \ref{cohFsigma}. Indeed,
this theorem implies that the Hodge structure on $H^2(Y_\sigma,\Q)$ contains an irreducible Hodge substructure with $h^{1,1}=20$. If such a covering existed,   this irreducible Hodge structure
would inject into the transcendental part of
the $H^2$ of  an abelian fourfold, an abelian
surface, or an algebraic K3 surface, where ``transcendental'' means ``orthogonal to
the set of Hodge classes in the  Poincar\'{e} dual cohomology group.''
But the Hodge structures on the transcendental part of the $H^2$ of an abelian fourfold, an abelian
surface, or an algebraic K3 surface all have $h^{1,1}_{\rm tr}\le 19$.

To conclude the proof of the theorem, we need to show $b_2(Y_\sigma)=23$.
We already know   $b_2(Y_\sigma)\ge 23$: indeed, the image of $(q_*p^*)_{\rm van}$ has rank
$22$ and it is not the whole of $H^2(Y_\sigma,\Q)$ because it does not contain any Hodge class
for a very general $\sigma$.  As  $Y_\sigma$ is an irreducible hyper-K\"ahler fourfold, the equality $b_2(Y_\sigma)=23$ then
 follows from \cite{guan}, where it is  proved that $23$ is the maximal possible second Betti number.
\end{proof}

\begin{rema}\label{manhan}   It is also possible (and even shorter) to prove that $Y_\sigma$ is a hyper-K\"ahler variety by showing
$$\chi(Y_\sigma, \cO_{Y_\sigma})=\sum_{i=0}^{20}(-1)^i\chi\Big(G(6,V_{10}), \bigwedge^i\bigl(\bigwedge^3\cE_6\bigr)\Big)=3,$$   using for example
Macaulay. Alternatively,  as shown to us by Manivel and Han,  using the Koszul resolution of $\cO_{Y_\sigma}$, Bott's theorem, and properties of the irreducible representations that occur in  $\bigwedge^i(\bigwedge^3V_6)$ (or, alternatively, the program LiE), one can prove directly $h^2(Y_\sigma, \cO_{Y_\sigma})=1$. However, the  proof above is more geometric
%uses more geometry and less knowledge of cohomology of the Grassmannian with value in homogeneous vector bundles.
and  explains where the holomorphic $2$-form comes from.
\end{rema}

To conclude this section, note that Theorem \ref{depart} allows us in turn to refine Theorem \ref{cohFsigma} as follows. Consider again the inclusion
$i_\sigma$ of $Y_\sigma$ into $G(6,V_{10})$ and   define the vanishing
cohomology $H^2(Y_\sigma,\Q)_{\rm van}$ as the kernel of
$$ i_{\sigma*}:H^2(Y_\sigma,\Q)\to
H^{42}(G(6,V_{10}),\Q)\isom H_6(G(6,V_{10}),\Q).$$

\begin{coro}  \label{cor6avril} The morphism $i_{\sigma*}$ has rank $1$.
 The morphism
of Hodge structures
$(q_*p^*)_{\rm van}$ defined in (\ref{pq}) takes values in $H^2(Y_\sigma,\Q)_{\rm van}$ and induces an isomorphism
$$H^{20}(F_\sigma,\Q)_{\rm van}\isom H^2(Y_\sigma,\Q)_{\rm van}.$$
\end{coro}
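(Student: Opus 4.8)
The plan is to reduce the whole statement to the single vanishing
$$i_{\sigma*}\circ(q_*p^*)_{\rm van}=0\colon\ H^{20}(F_\sigma,\Q)_{\rm van}\lra H^{42}(G(6,V_{10}),\Q).$$
Granting it, the image of $(q_*p^*)_{\rm van}$ is contained in $\Ker i_{\sigma*}$; by Theorem \ref{cohFsigma} this image has dimension $\dim H^{20}(F_\sigma,\Q)_{\rm van}=22$, while $\dim H^2(Y_\sigma,\Q)=23$ by Theorem \ref{depart}, so $i_{\sigma*}$ has rank at most $1$. But $i_{\sigma*}$ is nonzero: writing $h$ for the Pl\"ucker class on $G(6,V_{10})$, the projection formula gives $i_{\sigma*}(i_\sigma^*h)=h\cdot[Y_\sigma]$, whose product with $h^3$ equals $\deg Y_\sigma>0$. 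Hence $i_{\sigma*}$ has rank exactly $1$, the subspace $H^2(Y_\sigma,\Q)_{\rm van}:=\Ker i_{\sigma*}$ has dimension exactly $22$, the inclusion $\Im(q_*p^*)_{\rm van}\subseteq H^2(Y_\sigma,\Q)_{\rm van}$ is an equality, and since $(q_*p^*)_{\rm van}$ is an injective morphism of Hodge structures (Theorem \ref{cohFsigma}), it induces the asserted isomorphism.

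To prove the vanishing, I would pass to the partial flag variety $\Phi=\{([W_3],[W_6])\mid W_3\subset W_6\subset V_{10}\}$, with its projections $\pi_3\colon\Phi\to G(3,V_{10})$ and $\pi_6\colon\Phi\to G(6,V_{10})$, both Grassmann bundles, hence smooth and proper. Then $G_\sigma=\pi_6^{-1}(Y_\sigma)$, the maps $q$ and $p$ are the (co)restrictions of $\pi_6$ and $\pi_3$, and $G_\sigma$ is contained in $\widetilde F:=\pi_3^{-1}(F_\sigma)$ since $\sigma\vert_{W_6}=0$ forces $\sigma\vert_{W_3}=0$. Write $k\colon G_\sigma\hookrightarrow\widetilde F$, $l\colon\widetilde F\hookrightarrow\Phi$, and $\pi_F\colon\widetilde F\to F_\sigma$ for the (flat) restriction of $\pi_3$, so that $p=\pi_F\circ k$ and $i_\sigma\circ q=\pi_6\circ l\circ k$. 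For $x\in H^{20}(F_\sigma,\Q)_{\rm van}$ the projection formula for $k$ yields
$$i_{\sigma*}q_*p^*x=\pi_{6*}\,l_*\bigl(\pi_F^*x\cdot[G_\sigma]_{\widetilde F}\bigr),$$
where $[G_\sigma]_{\widetilde F}$ is the cycle class of $G_\sigma$ inside $\widetilde F$. Now restriction of alternating $3$-forms from $W_6$ to $W_3$ gives a surjection $\pi_6^*\bigwedge^3\cE_6\to\pi_3^*\det\cE_3$, under which the section $\pi_6^*\sigma$ becomes the pullback of the section of $\cO_{G(3,V_{10})}(1)=\det\cE_3$ cutting out $F_\sigma$; so on $\widetilde F$ the section $\pi_6^*\sigma$ takes values in the rank-$19$ kernel subbundle $\cK:=\Ker(\pi_6^*\bigwedge^3\cE_6\to\pi_3^*\det\cE_3)$, and its zero locus there is exactly $G_\sigma$. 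Since $\codim_{\widetilde F}G_\sigma=\dim\widetilde F-\dim G_\sigma=32-13$ equals $\rg\cK$, we get $[G_\sigma]_{\widetilde F}=l^*\delta$ with $\delta:=c_{19}(\cK)\in H^{38}(\Phi,\Q)$. The projection formula for $l$ then gives $i_{\sigma*}q_*p^*x=\pi_{6*}\bigl((l_*\pi_F^*x)\cdot\delta\bigr)$. Finally, $\widetilde F=\pi_3^{-1}(F_\sigma)$ sits in a Cartesian square over $j\colon F_\sigma\hookrightarrow G(3,V_{10})$ with $\pi_3$ flat, so base change for the Gysin morphism gives $l_*\pi_F^*=\pi_3^*j_*$; as $x\in H^{20}(F_\sigma,\Q)_{\rm van}=\Ker j_*$, this forces $l_*\pi_F^*x=\pi_3^*(j_*x)=0$, whence $i_{\sigma*}q_*p^*x=0$.

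The step requiring the most care is the identification $[G_\sigma]_{\widetilde F}=l^*c_{19}(\cK)$ — equivalently, that $G_\sigma$ has the expected codimension $19$ in $\widetilde F$, which comes down to the dimension bookkeeping $\dim G_\sigma=4+\dim G(3,W_6)=13$ and $\dim\widetilde F=\dim F_\sigma+\dim G(3,V_{10}/W_3)=32$ — together with the base-change identity $l_*\pi_F^*=\pi_3^*j_*$. Everything else is the projection formula and Betti-number bookkeeping already supplied by Theorems \ref{cohFsigma} and \ref{depart}.
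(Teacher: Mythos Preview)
Your proof is correct, and the overall architecture---reduce to the vanishing of $i_{\sigma*}\circ(q_*p^*)_{\rm van}$, then combine the $22$-dimensional image with $b_2(Y_\sigma)=23$---is the same as the paper's. The difference lies in how the vanishing is obtained. The paper argues Hodge-theoretically: $H^{42}(G(6,V_{10}),\Q)$ carries the trivial Hodge structure, while $H^{20}(F_\sigma,\Q)_{\rm van}$ is nontrivial and (for very general $\sigma$) simple, so any morphism of Hodge structures between them is zero. You instead give a direct geometric proof: working on the flag variety $\Phi$, you identify $[G_\sigma]_{\widetilde F}$ with the restriction of a class $\delta\in H^\bullet(\Phi,\Q)$, apply the projection formula, and use the base-change identity $l_*\pi_F^*=\pi_3^*j_*$ for the Cartesian square over the smooth map $\pi_3$ to factor everything through $j_*$, which kills vanishing cohomology by definition. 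Your route is slightly longer but has the advantage of not invoking the simplicity statement from Theorem \ref{cohFsigma}.2) (nor any very-general-$\sigma$ specialization): it works uniformly for every $\sigma$ with $F_\sigma$ and $Y_\sigma$ smooth of the expected dimension. You also make explicit the easy point that $i_{\sigma*}\ne 0$ via $i_{\sigma*}(i_\sigma^*h)\cdot h^3=\deg Y_\sigma>0$, which the paper leaves implicit.
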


\begin{proof}
The composition
$$i_{\sigma*}\circ (q_*p^*)_{\rm van}:H^{20}(F_\sigma,\Q)_{\rm van}\to H^{42}(G(6,V_{10}),\Q)$$
vanishes, because the Hodge structure on the right-hand-side is trivial, while the Hodge structure
on the left-hand-side is nontrivial and generically simple. Thus $(q_*p^*)_{\rm van}$ takes values
in $H^2(Y_\sigma,\Q)_{\rm van}$.
We know that this morphism is injective and that the left-hand-side has dimension
$22$. Hence all the statements follow from the equality $b_2(Y_\sigma)=23$, so that
$\dim(\Ker i_{\sigma*})\le 22$, with equality if and only if $\rk(i_{\sigma*})=1$ and
$(q_*p^*)_{\rm van}$ surjects onto $H^2(Y_\sigma,\Q)_{\rm van}$.
\end{proof}

\section{Singular hypersurfaces $F_\sigma$\label{section2}}
In this section, we are interested in those $\sigma\in \bigwedge^3V_{10}^*$ for which the hypersurface  $F_\sigma\subset G(3,V_{10})$ is  {\em singular.}

\begin{prop}\label{sing}
The dual variety $G(3,V_{10})^*\subset \P(\bigwedge^3V_{10}^*)$ is an irreducible hypersurface. For $\sigma$ general in  $G(3,V_{10})^*$, the corresponding hyperplane section $F_\sigma$ of $G(3,V_{10})$ has a unique singular point. It corresponds
to a $3$-dimensional vector subspace $W\subset V_{10}$ such that $\sigma\vert_{\bigwedge^2W  \wedge V_{10}}=0$.
\end{prop}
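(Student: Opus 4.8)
The plan is to realize $G(3,V_{10})^*$ as the image of the conormal variety of $G(3,V_{10})$ under the natural projection, and to control the fibres of that projection.

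First I would identify when $F_\sigma$ is singular at a point $[W]\in G(3,V_{10})$. The affine tangent space at $[W]$ to the cone over $G(3,V_{10})\subset\P(\bigwedge^3V_{10})$ is $\bigwedge^2W\wedge V_{10}$; since $F_\sigma$ is the hyperplane section of $G(3,V_{10})$ cut out by the linear form $\sigma$, it is singular at $[W]$ exactly when $\sigma$ vanishes on $\bigwedge^2W\wedge V_{10}$ (and then automatically $[W]\in F_\sigma$, as $\bigwedge^3W\subset\bigwedge^2W\wedge V_{10}$). This is already the last sentence of the proposition, and it shows that
$$I:=\{([W],[\sigma])\in G(3,V_{10})\times\P(\textstyle\bigwedge^3V_{10}^*)\mid\sigma|_{\bigwedge^2W\wedge V_{10}}=0\}$$
is the conormal variety of $G(3,V_{10})$, with $G(3,V_{10})^*$ equal to its image under the second projection $\pi$.

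Next I would study $I$ through its first projection. Writing $V_{10}=W\oplus U$ with $\dim U=7$ one has $\bigwedge^2W\wedge V_{10}=\bigwedge^3W\oplus(\bigwedge^2W\otimes U)$, of dimension $22$, so the fibre of $I\to G(3,V_{10})$ over $[W]$ is the projectivized annihilator of $\bigwedge^2W\wedge V_{10}$ in $\bigwedge^3V_{10}^*$, of dimension $120-22-1=97$. Thus $I$ is a $\P^{97}$-bundle over the irreducible $21$-dimensional $G(3,V_{10})$, hence irreducible of dimension $118$, and $G(3,V_{10})^*=\pi(I)$ is irreducible. As the Pl\"ucker system is base-point-free, a general $F_\sigma$ is smooth, so $G(3,V_{10})^*$ is a proper irreducible subvariety of $\P(\bigwedge^3V_{10}^*)$ of dimension at most $118$. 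To see that a general $\sigma\in G(3,V_{10})^*$ gives an $F_\sigma$ with a \emph{unique} singular point, I would bound the variety $I^{(2)}$ of triples $([W],[W'],[\sigma])$ with $W\ne W'$ and $\sigma$ vanishing on both $\bigwedge^2W\wedge V_{10}$ and $\bigwedge^2W'\wedge V_{10}$: projecting to pairs $([W],[W'])$ and stratifying by $j:=\dim(W\cap W')\in\{0,1,2\}$, the $j$-th stratum of pairs has dimension $21+(3-j)(j+7)$, and over a general pair in it the fibre is the projectivization of the space of $\sigma$'s vanishing on $(\bigwedge^2W\wedge V_{10})+(\bigwedge^2W'\wedge V_{10})$, a subspace whose dimension one reads off in coordinates adapted to $W$ and $W'$ (for instance it is $44$ when $W\cap W'=0$, the two $22$-dimensional spaces then meeting only in $\{0\}$ for weight reasons). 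A short bookkeeping then shows each stratum contributes dimension at most $117$ to $I^{(2)}$, with equality on the open stratum $j=0$, so the locus of $[\sigma]$ with $F_\sigma$ having at least two singular points, being contained in the image of $I^{(2)}$, has dimension at most $117$.

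The remaining and \emph{main} point is that $G(3,V_{10})^*$ is genuinely a hypersurface, i.e.\ $\dim G(3,V_{10})^*=118$, equivalently that $\pi$ is generically finite; this is \emph{not} forced by the dimension counts above, since dual varieties can be defective (e.g.\ $G(2,V_{2m+1})$). I would settle it by exhibiting a single $\sigma_0$ for which $F_{\sigma_0}$ has an \emph{isolated} singular point: then $\pi^{-1}([\sigma_0])\cong\Sing F_{\sigma_0}$ is finite, so $\pi$ has a $0$-dimensional fibre, hence---$\pi$ being proper with irreducible source---is generically finite onto its image, and $\dim G(3,V_{10})^*=\dim I=118$. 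Such a $\sigma_0$ can be produced by fixing $[W]$, choosing $\sigma_0$ general subject to $\sigma_0|_{\bigwedge^2W\wedge V_{10}}=0$ (so $\sigma_0=\tau+\sum_k\rho_k\wedge w_k^*$ with $\tau\in\bigwedge^3U^*$, $\rho_k\in\bigwedge^2U^*$), and checking $\Sing F_{\sigma_0}=\{[W]\}$; computing the differential of $\pi$ at $([W],[\sigma_0])$ one sees the part $\tau$ drops out and this reduces to the linear-algebra assertion that for general $\rho_1,\rho_2,\rho_3\in\bigwedge^2U^*$ the only triple $(\varphi_1,\varphi_2,\varphi_3)\in U^{\oplus3}$ with $\iota_{\varphi_i}\rho_j=\iota_{\varphi_j}\rho_i$ for all $i<j$ (up to sign) is the trivial one---a statement one verifies on an explicit example (or with Macaulay2), and which is exactly the non-defectivity of $G(3,V_{10})$. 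Granting this, $G(3,V_{10})^*$ is an irreducible hypersurface, and by the dimension count of the previous paragraph its general point corresponds to an $F_\sigma$ with exactly one singular point, necessarily a $[W]$ with $\sigma|_{\bigwedge^2W\wedge V_{10}}=0$. I expect this non-defectivity input to be the real obstacle; everything else is incidence-variety bookkeeping.
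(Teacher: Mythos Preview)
Your argument is correct, and the identification of the singularity condition via the affine tangent space $\bigwedge^2 W\wedge V_{10}$ is exactly what the paper does. The route you take for the rest, however, is different from the paper's.

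The paper does not set up the conormal variety or do any stratified dimension count. Instead it simply cites Lascoux's computation that the dual variety $G(3,V_{10})^*$ has degree $640$, which immediately gives that it is a hypersurface; irreducibility and the fact that a general tangent hyperplane is tangent at a single point are then dismissed as ``classical'' (this is the standard biduality/reflexivity statement for projective duals over $\C$: a smooth point of $X^*$ corresponds to a unique point of tangency on $X$). So the paper trades your self-contained incidence-variety analysis for a black-box degree formula plus classical projective duality. Your approach has the advantage of being essentially self-contained and of isolating exactly where the content lies---the non-defectivity input you flag, which you correctly reduce to the vanishing of $\ker d\pi$ at a general point, i.e.\ to the statement that for generic $\rho_1,\rho_2,\rho_3\in\bigwedge^2 U^*$ the system $\iota_{\varphi_i}\rho_j=\iota_{\varphi_j}\rho_i$ has only the trivial solution in $U^{\oplus 3}$ (a square $21\times 21$ linear system, so this is a single nonvanishing-determinant check). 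The paper's approach is shorter on the page but imports a substantially harder result (the full degree of the dual) to establish a much weaker fact (positive degree). Your bookkeeping on the strata $j=0,1,2$ is fine and gives the uniqueness of the singular point directly; the paper gets this for free from reflexivity once the dual is a hypersurface.
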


\begin{proof}The fact that the dual variety $G(3,V_{10})^*\subset \P(\bigwedge^3V_{10}^*)$ is a hypersurface follows for example from \cite{las}, \S3, which proves that its degree is 640. Then it is classical that this hypersurface is irreducible, and that a general  point corresponds to a hyperplane tangent to  $G(3,V_{10})$ at a single point.

Let $[W]$ be a point of $G(3,V_{10})$. The embedding  of
$$T_{G(3,V_{10}),[W]}\isom \Hom(W,V_{10}/W)$$
 into
 $$T_{\P(\bigwedge^3V_{10}),[\bigwedge^3W]}\isom\Hom(\bigwedge^3W,\bigwedge^3V_{10}\Big/\bigwedge^3W)$$
  is given by
$$ u\mapsto \bigl(  w_1\wedge w_2\wedge w_3 \mapsto  u(w_1)\wedge w_2\wedge w_3+w_1\wedge u(w_2)\wedge w_3+ w_1\wedge w_2\wedge u(w_3)\bigr).
$$
Therefore, the hyperplane section $F_\sigma\subset G(3,V_{10}) $ defined by $\sigma\in \bigwedge^3V_{10}^*$ is singular at   $[W]$ if and only if  $\sigma(w_1\wedge w_2\wedge v)=0$ for all $w_1$, $w_2$ in $W$ and all $v\in V_{10}$.
\end{proof}

We will henceforth assume that $\sigma$ corresponds to a general point of the
discriminant  hypersurface $G(3,V_{10})^*$ and we denote by $[W]$ the unique singular point of
 $F_\sigma$. By Proposition \ref{sing}, we have
 $$\sigma\vert_{\bigwedge^2W \wedge V_{10}}= 0.$$
 % Let $p:V\to V/W$ be the canonical projection.

  For each $d\in\{0,1,2,3\}$, let  $Y^d_\sigma$ be the union of those components $Y'_\sigma$ of $Y_\sigma$ such that $\dim(W\cap W_6)=d$ for $[W_6]$ general in $Y'_\sigma$.

   \begin{prop}\label{Yi}
1) The variety $Y^3_\sigma$ is a general K3 surface of   genus 12.

2) The varieties $Y^1_\sigma$ and $Y^2_\sigma$  are either empty or smooth of dimension $2$.

3) The variety $Y^0_\sigma$ is a smooth  and irreducible fourfold.

4) The variety $Y_\sigma$  is  a normal and irreducible fourfold.
 \end{prop}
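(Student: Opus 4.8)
The plan is to obtain 4) as an essentially formal consequence of parts 1)--3) together with the standard bound on the codimension of the zero locus of a section of a vector bundle. Recall first that $Y_\sigma$ is the zero locus of a section of the rank-$20$ bundle $\bigwedge^3\cE_6$ on the $24$-dimensional Grassmannian $G(6,V_{10})$, so that every irreducible component of $Y_\sigma$ has dimension at least $4$. On the other hand, $Y_\sigma$ is the union of the sets $Y^0_\sigma,\dots,Y^3_\sigma$; by 1) and 2) each of $Y^1_\sigma$, $Y^2_\sigma$, $Y^3_\sigma$ has dimension at most $2$, while by 3) the set $Y^0_\sigma$ has dimension $4$. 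Hence $Y_\sigma$ has pure dimension $4$, equal to the expected dimension $24-20$. It follows that $Y_\sigma$ is a local complete intersection in $G(6,V_{10})$, and in particular that it is Cohen--Macaulay, so that it satisfies Serre's condition $S_2$.

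For irreducibility, observe that $Y^0_\sigma$ is the open subset of $Y_\sigma$ consisting of those $[W_6]$ with $W\cap W_6=0$, and that its complement $Y^1_\sigma\cup Y^2_\sigma\cup Y^3_\sigma$ has dimension at most $2$. Since $Y_\sigma$ has pure dimension $4$, the open subset $Y^0_\sigma$ is dense and meets every irreducible component of $Y_\sigma$; as it is irreducible by 3), we conclude that $Y_\sigma=\overline{Y^0_\sigma}$ is irreducible.

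For normality, note that by 3) the variety $Y_\sigma$ is smooth along $Y^0_\sigma$, so that its singular locus is contained in $Y^1_\sigma\cup Y^2_\sigma\cup Y^3_\sigma$ and hence has dimension at most $2$, that is, codimension at least $2$ in the fourfold $Y_\sigma$. Thus $Y_\sigma$ is regular in codimension $1$, and combined with the condition $S_2$ established above, Serre's criterion shows that $Y_\sigma$ is normal.

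As for the difficulty: part 4) in itself is essentially formal, and the substance lies entirely in 1)--3), especially in 3), whose proof must simultaneously establish the irreducibility of the open stratum $Y^0_\sigma$ and the smoothness of $Y_\sigma$ along it (presumably via a tangent space computation at a point $[W_6]$ with $W\cap W_6=0$). Within the argument above, the one step that deserves attention is the verification that $\dim Y_\sigma=4$, since it is this equality that makes $Y_\sigma$ a local complete intersection and so allows Serre's criterion to be invoked; it relies on combining the upper bounds on $\dim Y^i_\sigma$ for $i\in\{1,2,3\}$ coming from 1)--2) with the lower bound on the dimension of the components coming from the vector-bundle description of $Y_\sigma$.
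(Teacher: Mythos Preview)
Your argument for part 4) is correct and, for normality, essentially identical to the paper's: both establish that $Y_\sigma$ is a local complete intersection of the expected dimension (hence Cohen--Macaulay) with singular locus contained in the $2$-dimensional locus $Y^1_\sigma\cup Y^2_\sigma\cup Y^3_\sigma$, and conclude via Serre's criterion.

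The one genuine difference is in the irreducibility step. You deduce it directly from the irreducibility of $Y^0_\sigma$ asserted in 3), using that $Y^0_\sigma$ is open and dense. The paper instead invokes Hartshorne's theorem that a local complete intersection is connected in codimension~$1$: since $Y_\sigma$ is connected and its singular locus has codimension $\ge 2$, two distinct components would have to meet along a divisor lying in the singular locus, a contradiction. This route needs the global connectedness of $Y_\sigma$ (which the paper takes as known, e.g.\ by semicontinuity from the smooth case treated in \S\ref{section1}), but it does \emph{not} use the irreducibility of $Y^0_\sigma$. In fact, if you look at the paper's proof of 3), you will see that only smoothness and the dimension of $Y^0_\sigma$ are established there by the parameter count; the irreducibility of $Y^0_\sigma$ is obtained a posteriori, as a consequence of the irreducibility of $Y_\sigma$ proved in 4). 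So while your deduction of 4) from the \emph{statement} of 3) is perfectly valid, be aware that if you were writing out the whole proposition you would either need an independent argument for the irreducibility of $Y^0_\sigma$, or else argue as the paper does and recover that irreducibility from 4).
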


 \begin{proof}
1) Choose a decomposition $V_{10}=W\oplus Q$. Since $ \sigma$ vanishes on $\bigwedge^2W \wedge V_{10}$, we can write $\sigma=\sigma_1+\sigma_2$
 with $\sigma_1\in W^*\otimes \bigwedge^2Q^*$ and $\sigma_2\in \bigwedge^3Q^*$. The projection $V_{10}\to Q$ induces an isomorphism between $Y^3_\sigma$ and
 \begin{equation}\label{sig}
 S=\{[W']\in G(3,Q)\mid  [W\oplus W']\in Y_\sigma\}.
  \end{equation}
This variety is defined by the vanishing
 of $\sigma_2$, viewed as a section of $\cO_{G(3,Q)}(1)$, and of $\sigma_1$, viewed as   a $3$-dimensional space of sections of $\bigwedge^2\cS_3^*$. Since $\sigma_1$ and $\sigma_2$ are general, $S$ is
   a general K3 surface  of genus 12  (\cite{muk}, Theorem 10). %Note for future reference that $\sigma_2$ is well-defined as an element $\sigma_W$ of $W^*\otimes \bigwedge^2(V_{10}/W)^*$.

%\medskip

% 2) This follows from a dimension count, which we will only do for $Y^1_\sigma$.  The dimension of the set of $([\sigma], [W],[W_2],[W_1],[W_5],[W'_2])$ such that $W=W_2\oplus W_1$, $V_{10}=W\oplus W_5\oplus W'_2$, and $\sigma\vert_{\bigwedge^2W\otimes V_{10}}$ and $\sigma\vert_{\bigwedge^3(W_1\oplus W_5)}$ vanish, \ie,
% \begin{eqnarray*}\sigma &\in&\quad\ \Big(W_1\otimes \bigl((W_5\otimes W'_2)\oplus \bigwedge^2 W'_2\big)\Big)^* \oplus \Big(W_2\otimes \bigwedge^2 (W_5\oplus W'_2)\Big)^*\\ &&{}\oplus\Big(\bigwedge^2W_5\otimes W'_2\Big)^*\oplus\Big(W_5\otimes \bigwedge^2W'_2\Big)^*, \end{eqnarray*}
%is $11+42+20+5-1$ for the choice of $[\sigma]$, plus $16+9+25+16$ for the choices of $W_2$, $W_1$, $W_5$, and $W'_2$, hence 143. The set of $([\sigma], [W],[W_6])$ such that $F_\sigma$ is singular at $[W]$ and $[W_6]\in Y^1_\sigma$, is therefore smooth of dimension $143-2-5-16=120$. For $[\sigma]$ general in the 118-dimensional hypersurface $G(3,V_{10})^*$, it follows by generic smoothness that $Y^1_\sigma$ is either empty, or smooth of dimension 2.

\medskip

2) This follows from a parameter count, which we will only do for $Y^2_\sigma$, the case of $Y^1_\sigma$ being completely analogous.  The dimension of the set of $([\sigma], [W],[W_1],[W_2],[W_4],[W'_3])$ such that $W=W_1\oplus W_2$, $V_{10}=W\oplus W_4\oplus W'_3$, and $\sigma\vert_{\bigwedge^2W \wedge V_{10}}$ and $\sigma\vert_{\bigwedge^3(W_2\oplus W_4)}$ vanish, \ie, with the notation above,
 \begin{eqnarray*}
 \sigma_1&\in& \Big(W_2\otimes \bigl((W_4\otimes W'_3)\oplus \bigwedge^2 W'_3\big)\Big)^* \oplus \Big(W_1\otimes \bigwedge^2 (W_4\oplus W'_3)\Big)^*
\\
\sigma_2&\in& \Big(\bigwedge^2W_4\otimes W'_3\Big)^*\oplus\Big(W_4\otimes \bigwedge^2W'_3\Big)^*\oplus\Big(  \bigwedge^3W'_3\Big)^*,
\end{eqnarray*}
is $30+21+18+12+1-1=81$  for the choice of $[\sigma]$, plus $9+16+24+21=70$ for the choices of $W_1$, $W_2$, $W_4$, and $W'_3$, hence 151. The set of $([\sigma], [W],[W_6])$ such that $F_\sigma$ is singular at $[W]$ and $[W_6]\in Y^2_\sigma$, is therefore smooth of dimension $151$ minus $2+8+21$ for the choices of $W_1$, $W_2$, and $W'_3$, hence $120$. For $[\sigma]$ general in the 118-dimensional hypersurface $G(3,V_{10})^*$, it follows by generic smoothness that $Y^2_\sigma$ is either empty, or smooth of dimension 2.

\medskip

3) Similarly, we consider  the set of $([\sigma], [W],[W_6],[W_1])$ such that $V_{10}=W\oplus W_6\oplus W_1$, and
  both  $\sigma\vert_{\bigwedge^2W  \wedge V_{10}}$ and $\sigma\vert_{\bigwedge^3W_6}$ vanish. It  is smooth, hence so is  the (122-dimensional) set of $([\sigma], [W],[W_6])$ such that $F_\sigma$ is singular at $[W]$, and $[W_6]\in Y^0_\sigma$. By generic smoothness, so is the general, 4-dimensional fiber $Y^0_\sigma$ of the projection $([\sigma], [W],[W_6])\mapsto ([\sigma], [W])$.

\medskip

4) Since $Y_\sigma$ has everywhere dimension at least $4$, the variety $Y^0_\sigma$ is dense in  $Y_\sigma$, which has therefore dimension $4$. It is moreover
 a local complete intersection, hence is connected in codimension $1$ (\cite{har}). It is also connected and, its singular locus being contained in the surface $Y^1_\sigma\sqcup Y^2_\sigma\sqcup Y^3_\sigma$, it is irreducible and normal.
 \end{proof}

Let  $p:V_{10}\to V_{10}/W$ be the canonical projection. The K3 surface $S$ of (\ref{sig}) is defined more canonically as
 \begin{equation}\label{sigma}
   S=\{[W']\in G(3,V_{10}/W)\mid  [p^{-1}(W')]\in Y_\sigma\}.
    \end{equation}
We now prove the main result of this section.

 \begin{theo}\label{phi}
 There is a birational isomorphism
 $$ \phi:S^{[2]}\dra  Y_\sigma$$
  defined as follows:
let $[W']$ and $[W'']$ be general points of $S$; then  $\phi([W'],[W''])$ is the only element $[W_6]$ of $ Y^0_\sigma$ such that $p(W_6)=  W'\oplus W''$.
    \end{theo}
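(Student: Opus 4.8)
The plan is to construct the rational map $\phi$ explicitly and to show it is birational by constructing an inverse rational map, both being defined on suitable dense open subsets. First I would set up the notation: fix a decomposition $V_{10}=W\oplus Q$ as in the proof of Proposition \ref{Yi}, write $\sigma=\sigma_1+\sigma_2$ with $\sigma_1\in W^*\otimes\bigwedge^2Q^*$ and $\sigma_2\in\bigwedge^3Q^*$, and identify $Q$ with $V_{10}/W$ so that $S\subset G(3,Q)$ is cut out by $\sigma_2$ (a section of $\cO_{G(3,Q)}(1)$) and by the three-dimensional space of sections of $\bigwedge^2\cS_3^*$ coming from $\sigma_1$. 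For a general point $[W_6]$ of $Y^0_\sigma$ we have $\dim(W\cap W_6)=0$, so $p$ maps $W_6$ isomorphically onto a $6$-dimensional subspace $\overline{W_6}=p(W_6)\subset Q$. The key linear-algebra observation is that the graph of $(p|_{W_6})^{-1}$ is encoded by a linear map $Q\supset\overline{W_6}\to W$, equivalently an element of $W\otimes\overline{W_6}^*$; the condition $\sigma|_{W_6}=0$ splits, using $\sigma=\sigma_1+\sigma_2$, into the condition $\sigma_2|_{\overline{W_6}}=0$ in $\bigwedge^3\overline{W_6}^*$ together with a condition that is \emph{affine-linear} in the graph data $\psi\in W\otimes\overline{W_6}^*$ (the $\sigma_1$-term being linear in $\psi$ and the $\sigma_2$-term providing the constant).

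The heart of the argument is then a dimension count showing that for $W'\oplus W''=\overline{W_6}$ with $[W'],[W'']$ general points of $S$, this affine-linear system has a unique solution $\psi$, which produces the unique $[W_6]\in Y^0_\sigma$ with $p(W_6)=W'\oplus W''$, hence defines $\phi$ as a morphism on a dense open subset of $S^{[2]}$. Concretely: $\dim G(6,Q)=\dim G(6,10)=24$, while $\dim S^{[2]}=4$, and the generic fiber of the rational map $S^{[2]}\dashrightarrow G(6,Q)$, $([W'],[W''])\mapsto[W'\oplus W'']$, is $20$-dimensional (it is the $\Span$-map on pairs of planes in a $6$-space, whose generic fiber is $G(3,6)\times G(3,6)$ modulo nothing, of dimension $9+9=18$ — I would recheck this, but the point is that the image is $4$-dimensional). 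Given $\overline{W_6}$ with a compatible splitting into two points of $S$, the space $W\otimes\overline{W_6}^*$ has dimension $3\cdot 6=18$; the condition ``$\sigma|_{\text{graph of }\psi}=0$'' is $\bigwedge^3(W_6)^*$-valued of dimension $\binom{6}{3}=20$; but $20$ of these equations automatically vanish because $\sigma|_{\bigwedge^2W\wedge V_{10}}=0$ kills all terms involving two or three basis vectors from the $W$-direction, leaving only the $\bigwedge^3\overline{W_6}^*$-part ($20$ equations, of which the constant term $\sigma_2|_{\overline{W_6}}$ is already almost zero on the relevant locus) and the $W\otimes\bigwedge^2\overline{W_6}^*$-part ($3\cdot 15=45$ equations linear in $\psi$). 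I would organize this so that on the locus where $\overline{W_6}=W'\oplus W''$ with $[W'],[W'']\in S$, precisely the right number of these $\sigma_2$-equations are satisfied (because $[W'],[W'']\in S$ means $\sigma_2$ vanishes on $W'$ and on $W''$), cutting the effective system down to exactly $18$ independent affine-linear equations in the $18$ unknowns $\psi$, with a unique solution for generic data.

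After $\phi$ is shown to be a well-defined morphism on a dense open set, I would construct the inverse: given general $[W_6]\in Y^0_\sigma$, set $\overline{W_6}=p(W_6)\in G(6,Q)$ and recover the pair $\{[W'],[W'']\}$ as the unique unordered pair of $3$-planes in $\overline{W_6}$ lying on $S$ whose direct sum is $\overline{W_6}$. This is where I expect the main obstacle: I must show that for generic $[W_6]\in Y^0_\sigma$ the plane $\overline{W_6}\subset Q$ contains exactly one such unordered pair from the K3 surface $S$. One direction — existence — will follow by a dimension count on the incidence variety $\{([W'],[W''])\in S\times S\mid W'\cap W''=0\}\to G(6,Q)$ combined with the fact that $\phi$ has dense image (which itself needs $\dim Y^0_\sigma=\dim S^{[2]}=4$, known from Proposition \ref{Yi}); uniqueness is the delicate part and I would prove it by a monodromy/degeneration argument or by a direct local analysis showing that the two maps are mutually inverse on a common dense open subset, so that their composites are the identity there. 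Finally, I would invoke the irreducibility of $Y_\sigma$ (Proposition \ref{Yi}(4)) and of $S^{[2]}$ to conclude that $\phi$ is a birational isomorphism. The cleanest write-up is probably to work throughout with the affine chart where everything is a graph, reduce the whole statement to: ``for generic $\overline{W_6}$ admitting one compatible $S$-splitting, the affine system has a unique solution and the $S$-splitting is unique,'' and verify these two genericity statements by the parameter counts already implicit in the proof of Proposition \ref{Yi}.
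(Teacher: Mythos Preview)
Your outline follows the same strategy as the paper --- parametrize $W_6$ as the graph of a linear map to $W$, observe the condition is affine-linear, and build an inverse --- but two of the essential steps are missing concrete arguments, and the surrounding dimension bookkeeping is off in ways that would prevent the proof from going through as written.

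First, some errors of detail. The space $Q=V_{10}/W$ has dimension $7$, not $10$, so $G(6,Q)\cong\P^6$ has dimension $6$, not $24$; the sentence about the ``generic fiber of $S^{[2]}\dashrightarrow G(6,Q)$'' being $20$-dimensional cannot be right since $S^{[2]}$ is $4$-dimensional. Your count ``$3\cdot 15=45$ equations linear in $\psi$'' is also confused: the full condition $\sigma|_{W_6}=0$ lives in $\bigwedge^3\overline{W_6}^*$, which is $20$-dimensional, and the decomposition $\overline{W_6}=W'\oplus W''$ together with $[W'],[W'']\in S$ kills exactly the two $1$-dimensional pieces $\bigwedge^3 W'{}^*$ and $\bigwedge^3 W''{}^*$ (both the constant and the linear part vanish there), leaving an affine map
\[
f_{W',W''}:\Hom(W'\oplus W'',W)\lra \bigl(\bigwedge^2W'\otimes W''\bigr)^*\oplus\bigl(W'\otimes\bigwedge^2W''\bigr)^*
\]
between $18$-dimensional spaces. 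This is what the paper writes down, and you are circling it, but the route you take to ``$18$ independent equations in $18$ unknowns'' is not coherent.

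The first genuine gap is the uniqueness of the solution. You assert that for generic $W',W''$ the system has a unique solution, but you give no mechanism to verify that the linear part of $f_{W',W''}$ is invertible. The paper sidesteps this entirely: it first shows the inverse map is well-defined (see below), so the graph of $\phi^{-1}$ is $4$-dimensional, dominates the $4$-dimensional $S^{[2]}$, and has \emph{affine spaces} as fibers (namely $f_{W',W''}^{-1}(0)$); hence the generic fiber is a point. You never identify this affine-space structure of the fiber as the key, and without it (or a direct proof of invertibility, which would be harder) the forward direction is incomplete.

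The second gap is the inverse. You propose to show that a general $\overline{W_6}=p(W_6)$ contains a unique unordered $S$-pair by ``monodromy/degeneration or direct local analysis,'' but this is where the paper has its cleanest idea: for $[W_6]\in Y^0_\sigma$, the condition that a $3$-plane $W'\subset W_6$ satisfy $[W\oplus W']\in Y_\sigma$ is exactly the vanishing of $\sigma_1$ on $W\otimes\bigwedge^2W'$, i.e.\ $[W']$ lies in the zero locus of three sections of $\bigwedge^2\cS_3^*$ on $G(3,W_6)$. Since $c_3(\bigwedge^2\cS_3^*)^3=2$ on $G(3,6)$, there are exactly two such $W'$ (for general $\sigma$), and one checks $W'+W''=W_6$. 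This Chern-class computation is the missing ingredient; without it you have no way to establish that the inverse is generically well-defined, and your vague alternatives are not a substitute.
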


 \begin{proof} We first show that the map $\phi^{-1}$ is well-defined at a general point $[W_6]$ of   $Y^0_\sigma$. We will show that there are exactly two points $[W']$ of $S$ such that $W'\subset p(W_6)$.

 Choose as above a decomposition $V_{10}=W\oplus Q$ with $W_6\subset Q$ and identify $V_{10}/W$ with $Q$. Let $W'$ be a 3-dimensional vector subspace of $W_6$.
  Since $\sigma$ vanishes on $\bigwedge^2W  \wedge V_{10} $ and on $\bigwedge^3W_6$, the condition $[W\oplus W']\in Y_\sigma$ is equivalent to the vanishing of $\sigma$ on $W\otimes \bigwedge^2W' $. This means that $[W_1']\in G(3,  W_6)$ is in the
zero locus of 3 sections of $\bigwedge^2\cS_3^*$. Since $c_3(\bigwedge^2\cS_3^*)^3=2$, it is either two or infinitely many points. As in the proof of Proposition \ref{Yi} above, one sees that it is in fact two points $[W']$ and $[W'']$ and that moreover, $W'+W''$ has dimension $6$, hence is equal to $W_6$. In other words, $\phi^{-1}$ is well-defined at the point $[W_6]$, which it maps to the unordered pair $([W'],[W''])$.

Conversely, let $[W']$ and $[W'']$ be general points of $S$. Choose again a splitting
 $V_{10}{ \isom} W\oplus V_{10}/W$.  Write a 6-dimensional vector subspace $W_6$ of $ W\oplus W'\oplus W''$ such that $W\cap W_6=\{0\} $  as the graph
$$ \{u(w',w'')+w'+w''\mid w'\in W',\ w''\in W''\}$$
 of some linear map $u:W'\oplus W''\to W$. The condition that $\sigma$ vanish on $W_6$ is then equivalent to the vanishing of the form $(\Id_{W'\oplus W''},u)^*\sigma\in \bigwedge^3(W'\oplus W'')^*$.
  Since $\sigma$ vanishes  on $\bigwedge^3(W\oplus W') $ and on $\bigwedge^3 (W\oplus W'') $,  this form is actually in
 $\left(\bigwedge^2 W' \otimes W''\right)^*\oplus \left( W' \otimes \bigwedge^2W''\right)^*$
 and depends in an affine way on $u$. In other words,  $[W_6]\in Y_\sigma$ if and only if $u$ is in the inverse image of $0$ by an {\em affine} map
 \begin{equation}\label{www}
 \Hom(W'\oplus W'',W)\stackrel{f_{W',W''}}{\lllra}\bigl( \bigwedge^2W'\otimes W''\bigr)^*\oplus \bigl(W'\otimes \bigwedge^2W''\bigr)^*.
 \end{equation}
Therefore, the set of elements $[W_6]$ of $ Y^0_\sigma$ such that $W_6\subset   W\oplus W'\oplus W''$ are (possibly empty) affine spaces.

The graph of $\phi^{-1}$ has dimension $4$ and dominates $ S^{[2]}$, and we just proved that the   fibers are affine spaces.
It follows that this projection is birational, hence $\phi^{-1}$ (and $\phi$) are birational isomorphisms.
 \end{proof}

We end this section with the computation of the line bundle
$\phi^*\cO_{Y_\sigma}(1)$ on $S^{[2]}$.   Recall that, if
$$\eps:\widetilde{S\times S}\to S\times S$$
is the blow-up of the diagonal,  $S^{[2]}$ can be seen as the quotient of
$\widetilde{S\times S}$ by the involution exchanging the two factors. We denote by
$$r :\widetilde{S\times S} \to S^{[2]} $$
 the quotient map, by   $\widetilde E\subset \widetilde{S\times S}$ the exceptional divisor of $\eps$, and by $E$ its image in $S^{[2]}$.

For $i\in\{1,2\}$, let  $p_i:\widetilde{S\times S}\to S$ be the $i$-th projection.
Given coherent sheaves  $\cF$ and $\cG$ on $S$, we define coherent sheaves on $\widetilde{S\times S} $ by setting
$$\cF\boxtimes\cG:=p_1^*\cF\otimes p_2^*\cG\quad{\rm and}\quad \cF\boxplus\cG:=p_1^*\cF\oplus p_2^*\cG.
$$

\begin{prop} \label{propphi*H}
The pull-back to $\widetilde{S\times S}$ of $\phi^*\cO_{Y_\sigma}(1)$ is isomorphic to
$(\cO_S(1)\boxtimes \cO_S(1))^{10}(-33 \widetilde  E)$.
\end{prop}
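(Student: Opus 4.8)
The plan is to compute the class $\phi^*\cO_{Y_\sigma}(1)$ in $\Pic(S^{[2]})$, or rather its pull-back to $\widetilde{S\times S}$, by intersecting with test curves. Since $\widetilde{S\times S}$ is a smooth projective fourfold and $r^*\Pic(S^{[2]})$ sits inside $\Pic(\widetilde{S\times S})$ with $r^*E = 2\widetilde E$, it is enough to know the intersection numbers of $r^*\phi^*\cO_{Y_\sigma}(1)$ against a spanning set of curve classes modulo numerical equivalence; the symmetric part of $\Pic(\widetilde{S\times S})$ is generated by $\cO_S(h)\boxtimes\cO_S(h)$ (where $h=c_1(\cO_S(1))$), by $\cO_S(x)\boxplus\cO_S(x)$ for $x$ ranging over $\Pic(S)$ orthogonal to $h$, and by $\widetilde E$. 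Because $\sigma$ (hence the whole construction) is general, the Picard group of the K3 surface $S$ is $\Z h$ with $h^2 = 22$ (genus $12$), so in fact we only need to pin down two coefficients: the multiple $a$ of $(\cO_S(1)\boxtimes\cO_S(1))$ and the multiple $-b$ of $\widetilde E$, i.e. we must show $(a,b)=(10,33)$.

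The first coefficient is essentially forced by Hodge theory and the numerics already in the paper. Via $r$ and $\eps$, the class of $\cO_S(1)\boxtimes\cO_S(1)$ descends to a natural polarization-type class on $S^{[2]}$ whose Beauville--Bogomolov square is $2\cdot 22 = 44$ (two copies of the degree-$22$ polarization), while $\widetilde E$ descends to (twice) the class $\delta$ with $q(\delta) = -2$. Writing $\phi^*\cO_{Y_\sigma}(1)$ in the basis, its Beauville--Bogomolov square must equal $q(h) = 22$ as recorded in the introduction (this is exactly the computation $q(h) = 100\cdot 22 + 33^2\cdot(-2) = 22$ used there in connection with \cite{ghs}). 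I would recover the individual coefficients as follows. First, restrict $\phi^*\cO_{Y_\sigma}(1)$ to a curve of the form $r(S\times\{pt\})$, i.e. a copy of $S$ embedded in $S^{[2]}$ as $\{x + p_0 : x\in S\}$ for fixed general $p_0$; under $\phi$ this maps to a curve in $Y_\sigma\subset G(6,V_{10})$, and one needs to compute its Plücker degree. Concretely, from the description of $\phi$ in Theorem \ref{phi}, fixing $[W'']=[p_0]$ and letting $[W']$ vary over $S$, the point $\phi([W'],[W''])$ is $[W_6]$ with $p(W_6) = W'\oplus W''$; so the composite $S\dashrightarrow Y_\sigma\subset G(6,V_{10})\to\P(\bigwedge^6 V_{10})$ is controlled by the sub-line-bundle $\bigwedge^3 W'\otimes(\text{fixed lines})$ plus the graph data $u = f_{W',W''}^{-1}(0)$ from \eqref{www}. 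Pulling back $\cO_{G(6,V_{10})}(1) = \det\cE_6$ and tracking how $W_6$ depends on $W'$ shows the restriction is $\cO_S(1)^{\otimes m}$ for an explicit $m$; matching $m$ with $a\cdot(\cO_S(1)\boxtimes\cO_S(1))\cdot r(S\times\{pt\})$ gives one linear equation in $a$ and $b$.

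The second and decisive equation comes from the exceptional divisor, i.e. from understanding how $\phi$ behaves along $E$. The key is to look at the "non-reduced" locus in $S^{[2]}$: a point of $E$ is a length-two subscheme $\xi$ supported at a single point $[W']\in S$ with a tangent direction, i.e. an element of $\P(T_{S,[W']})$. Under $\phi$ this corresponds, via the affine-map picture of Theorem \ref{phi}, to a degeneration $W''\to W'$, where the affine map $f_{W',W''}$ of \eqref{www} degenerates; one has to read off from the first-order behaviour of $f_{W',W'}$ how the $6$-dimensional space $W_6$ limits and hence what multiplicity $\phi^*\cO_{Y_\sigma}(1)$ picks up along $E$. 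I expect the cleanest route is to take a test curve: a general rational curve $\Gamma$ in $\widetilde{S\times S}$ that meets $\widetilde E$ transversally at one point (for instance the strict transform of $\{pt\}\times S$ through a point of the diagonal does not work since it is disjoint from $\widetilde E$ after blow-up — rather, take a curve in $S\times S$ through the diagonal transverse to it), compute $\deg(r^*\phi^*\cO_{Y_\sigma}(1)|_\Gamma)$ geometrically as a Plücker degree of the image curve in $G(6,V_{10})$, and subtract the contribution $a\cdot(\cO_S(1)\boxtimes\cO_S(1))\cdot\Gamma$ already accounted for; the residue is $2b$ times $(\Gamma\cdot\widetilde E) = b$. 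Combining with the square $q = 22$ constraint then forces $(a,b) = (10,33)$, and uniqueness of the symmetric class with these intersection numbers (using $\Pic(S) = \Z h$) finishes the proof.

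The main obstacle is the computation of the $\widetilde E$-coefficient: one must analyze the limiting behaviour of the affine section $f_{W',W''}$ of \eqref{www} as $W''$ collapses onto $W'$, and correctly identify the order of vanishing of the Plücker section along the exceptional divisor. This is where a careful local model — writing $W'' = W' + t\,v + O(t^2)$ for a tangent vector $v\in T_{S,[W']}\subset\Hom(W',V_{10}/W)$ and expanding $u(W',W'') = f_{W',W''}^{-1}(0)$ and thence $W_6 = \{u(w',w'') + w' + w'' \}$ to first order in $t$ — is unavoidable, and matching the resulting multiplicity against the integer $33$ is the crux. Everything else (the Hodge-theoretic square constraint, the reduction to two coefficients via $\Pic(S) = \Z h$, and the $\cO_S(1)\boxtimes\cO_S(1)$-coefficient) should be comparatively routine.
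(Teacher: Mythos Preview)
Your approach is genuinely different from the paper's, but as it stands it has a real gap and a circularity.

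\textbf{Circularity.} You invoke the Beauville--Bogomolov square $q(\phi^*\cO_{Y_\sigma}(1))=22$ as one of your constraints, citing the computation in the introduction. But that computation \emph{uses} the coefficients $10$ and $33$ of Proposition~\ref{propphi*H}; it is a consequence, not an input. You might hope to obtain $q=22$ independently via the Fujiki relation $c_1^4=3q^2$ on a smooth $Y_\sigma$, but the Fujiki constant $3$ is only known once you know the deformation type is $K3^{[2]}$, which is Theorem~\ref{theodef}, itself proved \emph{using} Proposition~\ref{propphi*H} (through Lemma~\ref{2fin}). So this constraint is not available.

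\textbf{The test-curve computation is not done, and the obstacle is exactly the content of the paper's proof.} Your naive picture ``$p(W_6)=W'\oplus W''$ hence $\det W_6\cong\bigwedge^3W'\otimes\bigwedge^3W''$'' would give $a=1$, not $a=10$. The discrepancy comes from the locus where the affine map $f_{W',W''}$ of (\ref{www}) drops rank: there $\phi$ contracts a divisor to the K3 surface $Y^3_\sigma\subset Y_\sigma$ (the points $[W_6]$ with $W\subset W_6$), and any test curve you choose will meet this divisor. You acknowledge this is ``the crux'' and ``the main obstacle'' but do not resolve it. The paper's proof is precisely a careful resolution of this point: one writes $\phi^*\cO_{Y_\sigma}(1)=(\det\cF_6)(D)$, where $\cF_6=r_*p_1^*(\cE_3\vert_S)$ and $D$ is the degeneracy divisor of the natural map $P\colon\cF_6\to\phi^*\cE_6$ induced by the projection $V_{10}\to V_{10}/W$. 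Computing $\det\cF_6$ is easy (it pulls back to $(\cO_S(1)\boxtimes\cO_S(1))(-3\widetilde E)$); the work is in two lemmas that identify $D$: first one shows that the determinantal locus of $f_{W',W''}$ lies in $\vert(\cO_S(1)\boxtimes\cO_S(1))^6(-20\widetilde E)\vert$, then one shows this divisor is everywhere of multiplicity $2$ and that $P$ has rank $3$ along it, so that $D=3D'_{\rm red}$ contributes $(\cO_S(1)\boxtimes\cO_S(1))^9(-30\widetilde E)$. Summing gives $10$ and $33$.

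In short: your proposal replaces a global sheaf computation by intersection with test curves, but the hard step---understanding the contribution from the contracted divisor---is the same in both approaches, and you have not carried it out. Dropping the circular $q=22$ constraint, you would need two honest test-curve computations, each of which requires exactly the analysis in the paper's Lemmas~\ref{1lemme} and~\ref{2lemme}.
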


\begin{proof}
There are two natural  vector bundles of rank 6 on the open subset $U_1$ of $S^{[2]}$ where $\phi$   defines a morphism $\phi_{U_1}: U_1\to Y_\sigma\subset G(6,V_{10})$:
the pull-back $\phi_{U_1}^*(\cE_6\vert_{Y_\sigma})$ and $\cF_6\vert_{U_1}$, where
\begin{equation*}\label{es}
\cF_6:=r_*p_1^*(\cE_3\vert_S).
\end{equation*}
 Recalling from Theorem \ref{phi} the definition of $\phi$, observe that there is a natural morphism
$$ \label{Pdef}
P:\cF_6\vert_{U_1}\to \phi_{U_1}^*(\cE_6\vert_{Y_\sigma}).
$$
  induced by the dual of the projection   $p:V_{10}\to V_{10}/W$.
This implies
\begin{equation}\label{phi*}
\phi^*(\cO_{Y_\sigma}(1))=\det(\cE_6\vert_{Y_\sigma})=(\det\cF_6)(D),
\end{equation}
where $D$ is the divisor defined by the vanishing of the determinant of $P$.
Next, as the pull-back   of $\cF_6$ to $\widetilde{S\times S}$ fits into the exact
sequence
$$
0\to r^*\cF_6\to \cE_3\vert_S\boxplus \cE_3\vert_S\to \eps_{\widetilde  E}^*\cE_3\vert_S\to 0,
$$
where $\eps_{\widetilde E}:\widetilde E\to S$ is induced by the blow-up map $\eps$,
we get
\begin{equation}\label{detf}
\det(r^*\cF_6)=(\cO_S(1)\boxtimes \cO_S(1))(-3\widetilde E).
\end{equation}
It remains to analyze $D$.
We first compute the class of the divisor $D'$ where  the morphism   (\ref{www}), suitably defined, is not of maximal rank. It has the same support as $D$, and we will next compute their respective multiplicities.

\begin{lemm} \label{1lemme} The pull-back to $\widetilde{S\times S}$ of the divisor $D'$ is in the
linear system $\vert (\cO_S(1)\boxtimes \cO_S(1))^6(-20 \widetilde E)\vert $.
\end{lemm}

\begin{proof} In order to compute the full  class of $D'$ as a determinant, we need first  to extend the definition of $f_{W',W''}$ at a general point of $E$.

The rational map
$$S^{[2]}\dra G(6,V_{10}/W)$$
defined by the global sections of $\cF_6$ is well-defined on an open subset $U_2$ of $S^{[2]}$ whose complement has codimension $\ge 2$.
At a point $z\in U_2$, we may consider the fiber  $\cF_{6,z}$ as a hyperplane in $V_{10}/W$ which, when $z$ is a general point $([W'],[W''])$, is just $W'\oplus W''$.

On the other hand, there is a natural restriction map
$$R: \bigwedge^{3}\cF_6\to \cL_2,$$
where
%, in the notation of (\ref{es}),
$\cL_2$ is the rank-2 vector bundle $  r_*p_1^*(\bigwedge^3\cE_3)$ on $S^{[2]}$. The fiber of $\cL_2$ at a pair $([W'],[W''])$ away from $E$ is
the direct sum $(\bigwedge^3W'\oplus \bigwedge^3W'')^*$.

  At the point $z$, the map $u\mapsto (\Id_{W'\oplus W''},u)^*\sigma$ considered in the   proof of Theorem \ref{phi} is now a map
 $
 \Hom(\cF^*_{6,z},W)\to   \bigwedge^3\cF_{6,z}
$ which takes values in $\Ker R_z$ away from $E$, and this still remains true along $E$.
Hence, we have extended
the definition of the map  (\ref{www})  over $U_2$ as the fiber of a map
$$
f: \cH\!om(\cF_6^*,W\otimes \cO_{U_2})\to   \cK\hskip-1mm er R\vert_{U_2}
$$
between two vector bundles of rank 18.
 One checks that $R$ is surjective in codimension $1$.
It follows that the vanishing of $\det (f)$ gives us a divisor in the linear system
$$\vert\bigl(\det (\bigwedge^3\cF_6)\otimes (\det\cL_2)^{-1}\otimes( \det\cF_6)^{-3}\vert=\vert  (\det\cL_2)^{-1}\otimes( \det\cF_6)^7\vert.$$
Using (\ref{detf}) and the fact that, analogously, the determinant of $\cL_2$ pulls back to
$(\cO_S(1)\boxtimes \cO_S(1))(- \widetilde E)$ on $\widetilde{S\times S}$, we see that this line bundle   pulls back  to the line bundle
$$(\cO_S(1)\boxtimes \cO_S(1))^6(-20 \widetilde E)$$
on $\widetilde{S\times S}$,  and this concludes the proof of the lemma.
\end{proof}

We can conclude  the proof of Proposition \ref{propphi*H} with the following lemma.

\begin{lemm}\label{2lemme}
1) The map $\phi$ contracts $D'$ to $Y_\sigma^3$, so that
the rank of $P$ along $D'$ is $3$.

2) The divisor $D'$ has everywhere multiplicity $2$.
\end{lemm}

Indeed, as $P$ has rank $3$ along $D'$, the reduced divisor $D'_{\rm red}$ underlying $D'$   appears with multiplicity $3$ in the divisor defined by
$\det (P)$. By Lemmas \ref{1lemme} and \ref{2lemme}, $r^*D'_{r\rm ed}$ belongs to the linear system
$\vert (\cO_S(1)\boxtimes \cO_S(1))^3(-10 \widetilde E)\vert $. Hence we get, using (\ref{phi*}) and  (\ref{detf}):
\begin{eqnarray*}
r^*\phi^*\cO_{Y_\sigma}(1)&=&r^*\bigl((\det \cF_6)(3D'_{\rm red})\bigr)\\
&=&(\cO_S(1)\boxtimes \cO_S(1))(-3 \widetilde E)\otimes(\cO_S(1)\boxtimes \cO_S(1))^9(-30 \widetilde  E)\\
&=&
(\cO_S(1)\boxtimes \cO_S(1))^{10}(-33 \widetilde  E),
\end{eqnarray*}
which is the content of the proposition.
\end{proof}

\begin{proof}[Proof of Lemma \ref{2lemme}.]
1)  By definition of $D'$, a point $z$ in $U_1\cap U_2$ has the property
that the point  $\phi(z)$ of $ Y_\sigma$ corresponds to
a vector subspace $W_6\subset V_{10}$ such that
$p\vert_{W_6}:W_6\to V_{10}/W$ is not of maximal rank. In other words, with the notation of
Proposition  \ref{Yi}, $\phi(z)$ belongs to $Y^i_\sigma$, for some $i\geq 1$. Furthermore,  the rank of $P$ at $z$ is equal to the rank of $p\vert_{W_6}$. By Proposition
\ref{Yi}, we have $\dim Y^i_\sigma\leq 2$ for $i\geq 1$, thus 1) is equivalent to the following.

\begin{claim} If $\sigma$ is general (in the hypersurface parametrizing
singular $F_\sigma$), no divisor of $S^{[2]}$ is contracted to  $Y_\sigma^1$ or $Y_\sigma^2$.
\end{claim}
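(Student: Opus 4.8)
The plan is to prove the Claim by a dimension count on an appropriate incidence variety, exactly in the style of the proofs of Proposition \ref{Yi}(2) and (3). The point is that a divisor of $S^{[2]}$ contracted to $Y^1_\sigma$ or $Y^2_\sigma$ would produce, for fixed general $\sigma$, a $3$-dimensional family of pairs $([W'],[W''])\in S^{[2]}$ all lying over the $2$-dimensional (at most) variety $Y^i_\sigma$ ($i\in\{1,2\}$); equivalently, a general point $[W_6]$ of such a contracted divisor has, among the subspaces $W'\subset p(W_6)$ with $[W\oplus W']\in Y_\sigma$, a positive-dimensional worth of them — or, more to the point, the generic fibre of $\phi^{-1}$ restricted to $Y^i_\sigma$ (interpreted via the affine maps $f_{W',W''}$ of (\ref{www})) would have dimension at least $3-\dim Y^i_\sigma\ge 1$. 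So it suffices to show that, for $\sigma$ general in $G(3,V_{10})^*$, the locus of $[W_6]\in Y^1_\sigma\cup Y^2_\sigma$ that are of the form $[W_6]=\phi(z)$ for $z$ in a contracted divisor is empty; I will argue the contrapositive by parameter count.

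First I would set up the incidence correspondence $\cI^i$ consisting of tuples $([\sigma],[W],[W_6],[W'])$ such that $F_\sigma$ is singular at $[W]$, $[W_6]\in Y^i_\sigma$, $[W']\in S$ (i.e. $[W\oplus W']\in Y_\sigma$ in the canonical sense of (\ref{sigma})), and $W'\subset p(W_6)$. Projecting to $([\sigma],[W])$, whose base is the $118$-dimensional hypersurface $G(3,V_{10})^*$, I want to show that the generic fibre of $\cI^i$ over its image in $G(3,V_{10})^*$ has dimension $\le 2$. The computation is the same kind of bookkeeping as in the proof of Proposition \ref{Yi}: fix the splitting $V_{10}=W\oplus Q$, write $\sigma=\sigma_1+\sigma_2$ with $\sigma_1\in W^*\otimes\bigwedge^2 Q^*$ and $\sigma_2\in\bigwedge^3 Q^*$, and count the dimension of $\{([\sigma],W_6,W')\}$ with $W'\subset W_6\subset Q$, $[W\oplus W']\in Y_\sigma$, and $[W_6]$ constrained to the appropriate $Y^i_\sigma$-type stratum (i.e. imposing $\dim(W_6\cap W^{\perp\text{-complement}})$ conditions encoding $i=1$ or $i=2$, just as the conditions $\sigma|_{\bigwedge^3(W_2\oplus W_4)}=0$ were imposed in Proposition \ref{Yi}(2)). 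Subtracting the $118$ dimensions of $G(3,V_{10})^*$ from the total, generic smoothness / upper-semicontinuity of fibre dimension then gives that for general $\sigma$ the fibre is either empty or of dimension $\le 2$; since a contracted divisor would force dimension $\ge 3$, this proves the Claim.

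The step I expect to be the main obstacle is the bookkeeping itself: one has to be careful that the "extra" subspace $W'$ genuinely varies in a positive-dimensional family when $[W_6]$ varies in a divisor (rather than being pinned down, as it is for general $[W_6]\in Y^0_\sigma$ by Theorem \ref{phi}, where there are exactly two choices), and to set up the strata $Y^1_\sigma$, $Y^2_\sigma$ with the correct incidence conditions so that the count is not thrown off by components of the wrong dimension. Concretely, the delicate point is identifying precisely which linear conditions on $\sigma_1,\sigma_2$ express "$[W_6]$ lies in $Y^i_\sigma$ and carries a $\ge 1$-dimensional family of good $W'$," analogous to the decomposition of $\sigma_1,\sigma_2$ into graded pieces displayed in the proof of Proposition \ref{Yi}(2); once that is pinned down, the arithmetic is routine. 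An alternative, perhaps cleaner, route: since $\phi^{-1}$ is birational (Theorem \ref{phi}) and $Y^0_\sigma$ is smooth, any divisor $D''\subset S^{[2]}$ contracted by $\phi$ must go into the singular locus $Y^1_\sigma\sqcup Y^2_\sigma\sqcup Y^3_\sigma$ of $Y_\sigma$; we already know (by the preceding text, and it will be shown in Lemma \ref{2lemme}(1)) that $D'$ maps to $Y^3_\sigma$, so the content of the Claim is that there is no \emph{further} contracted divisor over $Y^1_\sigma$ or $Y^2_\sigma$ — and this is exactly what the dimension count above rules out, because a contracted divisor over a surface $Y^i_\sigma$ ($i=1,2$) would, pulled back to the parameter space $G(3,V_{10})^*$, exceed the expected dimension there.
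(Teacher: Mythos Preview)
Your overall strategy---a parameter count on an incidence variety---is what the paper does for $Y^2_\sigma$, but your setup contains a genuine error that makes the count irrelevant, and for $Y^1_\sigma$ the paper proceeds by a quite different argument.

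\medskip

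\textbf{The incidence condition is wrong.} You assert that if a pair $([W'],[W''])$ lies in the fibre of $\phi$ over $[W_6]\in Y^i_\sigma$, then $W'\subset p(W_6)$. That is only correct for $[W_6]\in Y^0_\sigma$, where $p(W_6)=W'\oplus W''$. For $i\ge 1$ the space $p(W_6)$ has dimension $6-i<6$, and the description of $\phi$ via the affine maps $f_{W',W''}$ (the map you cite from (\ref{www})) shows that $W_6\subset W\oplus W'\oplus W''$, i.e.\ $p(W_6)\subset W'\oplus W''$, \emph{not} $W'\subset p(W_6)$. Generically one has $\dim(W'\cap p(W_6))=3-i$, so for $i=1$ the intersection is $2$-dimensional and for $i=2$ it is $1$-dimensional. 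Your incidence variety $\cI^i$ is therefore strictly smaller than the locus you need to control (and for $i=1$ it is in fact generically empty, so bounding its dimension proves nothing about the Claim). With the corrected condition $p(W_6)\subset W'\oplus W''$, the parameter count you sketch does go through for $Y^2_\sigma$: this is exactly what the paper does, working with the full tuple $([\sigma],[W],[W_6],[W'],[W''])$ and checking that the extra conditions (\ref{cond2}) are transverse to (\ref{cond1}), yielding a correspondence of dimension $120$ equal to that of $\{([\sigma],[W],[W_6])\}$, hence generically finite fibres.

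\medskip

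\textbf{For $Y^1_\sigma$ the paper does not use a parameter count.} Instead it exploits that $S$ is a \emph{very general} K3 with $\Pic(S)=\Z\cdot\cO_S(1)$. The correct fibre condition over $[W_6]\in Y^1_\sigma$ (namely $p(W_6)\subset W'\oplus W''$, with $\dim p(W_6)=5$) forces $\dim(W'\cap p(W_6))=2$, i.e.\ the evaluation map $\ev:p(W_6)^\bot\otimes\cO_S\to\cE_3$ drops to rank $1$ at $[W']$. If the fibre were positive-dimensional, this degeneracy locus would contain a curve; one then reads off the determinant of the saturated image and of the rank-$1$ quotient $\cE_3/(\Im\ev)_{\rm sat}$, and derives a contradiction from the cyclicity of $\Pic(S)$ (it would force $\cE_3^*(1-l)$ to have a nonzero section for some $l\ge 1$). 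This vector-bundle argument is both shorter and avoids the transversality bookkeeping that a direct parameter count for $Y^1_\sigma$ would require.
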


Let us first consider the case of $Y_\sigma^1$. On $U_2$,    the fiber of $\phi$ over a point $[W_6]\in Y_\sigma^1$ is contained
 in
the set of $([W'],[W''])\in S^{(2)}$ such that $W_6\subset p^{-1}( W'\oplus W'')$.
{As $[W_6]\in Y_\sigma^1$, the space   $p(W_6)$ has dimension $5$.} Let $p(W_6)^\bot\subset (V_{10}/W)^*$ be the 2-dimensional space of
linear forms vanishing on $p(W_6)$. As $p(W_6)$ has codimension $1$ in $W'\oplus W'' $, the space
$p(W_6)\cap W'$ has codimension $1$ in $W'$, and the rank at $[W']$ of the evaluation map
$$\ev :p(W_6)^\bot\otimes \cO_S\to \cE_3$$
is $1$. If the fiber $\phi^{-1}([W_6])$ has positive dimension, the set of $[W']$ as above contains  a curve,
and the saturation $(\Im \ev)_{\rm sat}$ has rank $2$ and nontrivial effective determinant.
 But $S$ is very general, hence its Picard group is cyclic, generated by
$\det\cE_3=\cO_S(1) $, so the curve above has to be  in a linear system $\vert\cO_S(l)\vert$, for some $l>0$. We get a contradiction from the fact that the
cokernel $\cE_3/(\Im\ev)_{\rm sat}$ is a rank-1 torsion-free sheaf with
determinant equal to $\cO_S(1-l)$, with $l\geq1$; this would imply that
$\cE_3^*(1-l)$ has a nonzero section for some $l\geq1$, which is absurd.

\medskip

We now turn to the case of $Y_\sigma^2$. A point $[W_6]$ in $ Y_\sigma^2$ is
 such that $W_2 :=W\cap W_6$ has dimension 2 and $W_4 := p(W_6)$ has dimension 4. We want to show that the
set of $([W'],[W''])\in S^{(2)}$ with $W_4\subset W'\oplus W''$
is  finite.

We  count parameters as in the proof of Proposition \ref{Yi}.2), whose notation we keep. We want to compute the dimension of the set of $([\sigma], [W],[W_1],[W_2],[W_4],[W'_3],[W'],[W''])$ such that $W=W_1\oplus W_2$, $V_{10}=W\oplus W_4\oplus W'_3$, $W_4\subset W'\oplus W''\subset W_4\oplus W'_3$, and, in addition to the conditions
\begin{equation}\label{cond1}
\sigma\vert_{\bigwedge^2W \wedge V_{10}}=\sigma\vert_{\bigwedge^3(W_2\oplus W_4)}=0
\end{equation}
  of that proof, such that
$$
\sigma\vert_{\bigwedge^3(W\oplus W')}=\sigma\vert_{\bigwedge^3(W\oplus W'')}=0.
$$
This means that the forms $\sigma_1$ and $\sigma_2$ must satisfy
     \begin{equation}\label{cond2}
\sigma_1\vert_{W\otimes \bigwedge^2W'}=\sigma_1\vert_{W\otimes \bigwedge^2W''}=\sigma_2\vert_{ \bigwedge^3W'}=\sigma_2\vert_{ \bigwedge^3W''}=0.
\end{equation}
Observe that we may assume   $\dim(W'\cap W_4)=\dim(W''\cap W_4)=1$, as
the case where one of these dimensions is $\geq 2$ can be ruled out by the method used in the   proof above. Then one checks   that the $9+9+1+1=20$ conditions (\ref{cond2}) are transverse to the conditions (\ref{cond1}).

Therefore, using the numbers from the proof of Proposition \ref{Yi}.2), there are $70+2+9+9=90$ parameters for the choice of $W_1$, $W_2$, $W_4$, $W'_3$, $W'$, and $W''$, and,  $81-20=61$ parameters for the choice of $[\sigma]$. It follows that the set of $([\sigma], [W],[W_6],[W'],[W''])$ such that $F_\sigma$ is singular at $[W]$ and the point $([W'],[W''])$ of $S^{[2]}$ is mapped to the point $[W_6]$ of $ Y^2_\sigma$ has the same dimension   $120$ as the set of $([\sigma], [W],[W_6])$. It follows that the corresponding projection is generically finite, which
proves the claim.

\medskip

2) By the proof of 1), we now have another set-theoretic description of the divisor $D'$: on $U_2$, it
is the set of pairs $([W'],[W''])\in S^{[2]}$ such that there exists
 $[W_6]\in Y_\sigma^3$ with
$$W\subset W_6\subset W\oplus W'\oplus W''.$$
This locus has another determinantal description as follows.
A $W_6$ as above is determined by
its $3$-dimensional  projection $W_3$ in $ W'\oplus W''$, and $[W_3]$ must be an element of
$S$. Write $W_3$ as the graph of a map
$$v:W'\rightarrow W''$$
(the nontransverse cases cannot fill in a divisor, by arguments as above).

Recall that $S$ is defined by a 3-dimensional space of $2$-forms
$\sigma_1 \in W^*\otimes \bigwedge^2(V_{10}/W)^*$ and a $3$-form
$\sigma_2\in  \bigwedge^3(V_{10}/W)^*$.

Since $\sigma_1$ vanishes on $ W\otimes \bigwedge^2W'$ and $W\otimes \bigwedge^2W''$,  its restriction to
$W\otimes (W'\oplus W'')$ belongs to
$W^*\otimes W^{'*}\otimes W^{''*}$, so that
the vanishing of
$(\Id,v)^*\sigma_1$ provides $9$ linear equations on $v$.
The existence of a nonzero solution $v$ is thus equivalent to
the nonindependence of these linear equations.
We have a morphism (only   defined on $U_2\moins  E $, but globally defined on
the double cover $\widetilde{S\times S}$)
$$
\begin{array}{rcl}
\beta:\cH\! om (\cS_1,\cS_2)&\to & W^*\otimes\bigwedge^2p_1^*\cE_3\\
v&\mapsto& (\Id,v)^*\sigma_1.
\end{array}
$$
At a point $([W'],[W''])$ where
$\beta$ does not have maximal rank, $\beta_{W',W''}^{-1}(0)$ contains a line $V_1$, and
we now have to impose a supplementary condition on
$v\in V_1$ in order that the corresponding $W_3=\Im(\Id,v)$ be
in $S$, namely
$$(\Id,v)^*\sigma_2=0.$$
Observe that this last equation is quadratic (inhomogeneous) in $v$, and vanishes at $v=0$.
Hence there is in fact  a unique $W_3\subset W'\oplus W''$ for a general point
$([W'],[W''])$ in the divisor $D''$ defined by $\det(\beta)$.

In order to conclude, we have to prove the following.

\begin{claim}
For general $\sigma$, the divisor $D''$ is reduced and $D'=2D''$.
\end{claim}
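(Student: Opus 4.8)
The plan is to pin down the multiplicities of the two determinantal divisors $D'$ and $D''$ along their common support, the decisive input being a soft invariant-theoretic identification of two $9\times 9$ determinants. First I would record what is already available: the analysis in the proof of 2) shows that, on $U_2$ (equivalently, on $\widetilde{S\times S}$ away from a closed subset of codimension $\ge 2$), the reduced divisors $D'_{\rm red}$ and $D''_{\rm red}$ coincide, since both parametrise the pairs $([W'],[W''])$ admitting a $3$-dimensional $W_3\subset W'\oplus W''$ with $[W_3]\in S$; in particular this common reduced divisor is invariant under the involution $\iota$ of $\widetilde{S\times S}$. Next I would prove that $D''$ is reduced for general $\sigma$: $D''$ is the degeneracy divisor of the bundle map $\beta$ between two bundles of rank $9$, the locus where $\beta$ has corank $\ge 2$ has expected codimension $4$, hence is finite in the fourfold $\widetilde{S\times S}$ for general $\sigma$, and since $D''$ is a Cartier divisor on a smooth variety it has no embedded component; so it suffices to check that $\det\beta$ vanishes to order $1$ at one general point of $D''_{\rm red}$, which I would get from a parameter count in the spirit of Proposition \ref{Yi} or from an explicit $\sigma$. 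Thus $D''=D''_{\rm red}$ is reduced and $\iota$-invariant.

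Then I would dissect the affine map $f_{W',W''}$ of (\ref{www}): it sends $u\in\Hom(W'\oplus W'',W)$ to the relevant components of $\sigma|_{\Gamma(u)}$, where $\Gamma(u)\subset W\oplus W'\oplus W''$ is the graph of $u$. Since the projection $p$ sends $\Gamma(u)$ onto $W'\oplus W''$ for every finite $u$, the $3$-form $\sigma_2$ contributes only to the constant term; and since $\sigma_1$ vanishes on $\bigwedge^3(W'\oplus W'')$, the assignment $u\mapsto\sigma_1|_{\Gamma(u)}$ is linear and equals the linear part $\bar f_{W',W''}$ of $f_{W',W''}$, which is what cuts out $D'$. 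A direct computation, using that $\sigma_1$ vanishes on $W\otimes\bigwedge^2W'$ and on $W\otimes\bigwedge^2W''$ (because $[W'],[W'']\in S$), shows that $\bar f_{W',W''}$ is block-diagonal, $\bar f_{W',W''}=\bar f'\oplus\bar f''$, where $\bar f'\colon\Hom(W',W)\to(\bigwedge^2W'\otimes W'')^*$ and $\bar f''\colon\Hom(W'',W)\to(W'\otimes\bigwedge^2W'')^*$ are obtained by contracting the triple-mixed component $\tau_1\in W^*\otimes W'^*\otimes W''^*$ of $\sigma_1$ against $u'$, respectively $u''$, and antisymmetrising; the two blocks are interchanged by $\iota$. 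Hence $D'$ is the zero divisor of $\det\bar f'\cdot\det\bar f''$.

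The heart of the argument is then the identification of $\det\bar f'$ with $\det\beta$. Regarding $W,W',W''$ as fixed $3$-dimensional spaces, both $\det\bar f'$ and $\det\beta$ are homogeneous of degree $9$ in the $3\times 3\times 3$ tensor $\tau_1$, and a short weight computation shows they are $\GL(W)\times\GL(W')\times\GL(W'')$-semiinvariants of the same character. Neither is identically zero, since $\bar f'$ and $\beta$ are isomorphisms at a general point; and classically the ring of $\SL(W)\times\SL(W')\times\SL(W'')$-invariants of $W^*\otimes W'^*\otimes W''^*$ is a polynomial ring with generators in degrees $6,9,12$, so a degree-$9$ invariant is indecomposable and unique up to scalar. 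Therefore $\det\bar f'=c\,\det\beta$ for a nonzero constant $c$, and, by the $\iota$-symmetry, $\det\bar f''=c'\,\iota^*\det\beta$. (One may bypass the invariant theory: $\det\bar f'$ and $\det\beta$ are sections of the same line bundle with the same support $D''_{\rm red}$, and for general $\sigma$ both vanish there to order $1$, hence define the same divisor.) Consequently $D'=(\det\bar f')_{0}+(\det\bar f'')_{0}=D''+\iota^*D''=2D''$, the last step because $D''$ is reduced and $\iota$-invariant. After the bookkeeping of the behaviour along $\widetilde E$ — carried out exactly as in Lemma \ref{1lemme}, in the regime where $W'$ and $W''$ specialise to a common $3$-space — this proves both assertions of the Claim.

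The main obstacle is this identification $\det\bar f'=c\,\det\beta$: one must set up $\bar f'$ precisely enough to recognise it as a contraction of the same tensor $\tau_1$ that produces $\beta$, and then either invoke the structure of the invariant ring of $3\times 3\times 3$ tensors or carry out the comparison of vanishing orders along $D''_{\rm red}$, which again rests on the genericity of $\sigma$. Secondary but still delicate points are the reducedness of $D''$ and the analysis near $\widetilde E$, where the two $3$-spaces degenerate.
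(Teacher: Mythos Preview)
Your proof follows the paper's strategy: decompose the linear part $\vec f_{W',W''}$ of the affine map (\ref{www}) into two $9\times 9$ blocks interchanged by $\iota$, identify each block determinant with $\det\beta$, and conclude $D'=D''+\iota^*D''=2D''$. The one substantive difference is how you identify $\det\bar f'$ with $\det\beta$. You invoke the invariant theory of $3\times 3\times 3$ tensors (uniqueness of the degree-$9$ $\SL_3^{\times 3}$-invariant); the paper instead makes the elementary linear-algebraic observation that, after the identifications $W'^*\cong\bigwedge^2W'$ and $W''^*\cong\bigwedge^2W''$ via the determinants, the map $\vec f_{W',W''}$ is \emph{exactly} the transpose of $\beta_{W',W''}\oplus\beta_{W'',W'}$, so $\det\vec f_{W',W''}=\pm\det\beta_{W',W''}\cdot\det\beta_{W'',W'}$ on the nose. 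Your route is correct but heavier; the paper's transposition remark is a one-line check in coordinates and dispenses with any appeal to the structure of an invariant ring. Two minor points: your ``alternative'' bypass (same line bundle, same support, both vanish to order~$1$) is circular unless you prove independently that $\det\bar f'$ vanishes simply along $D''_{\rm red}$, which is precisely what the transpose identification gives for free; and the bookkeeping along $\widetilde E$ you mention at the end is not needed for the Claim itself---the global bundle maps $f$ and $\beta$ were already extended across $\widetilde E$ in the setup of Lemma~\ref{1lemme}, so the divisor identity $D'=2D''$ holds globally once it holds on the dense open set where $W'$ and $W''$ are transverse.
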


The first fact is elementary and left to the reader. As for the second one, it follows from
the observation that at a point
$([W'],[W''])$ of $ S^{[2]}$, the linear part
$$
\begin{array}{rcl}
\vec f_{W',W''}: \Hom(W'\oplus W'',W)&\to&  \big(\bigwedge^2W'\otimes W''\bigr)^*\oplus \bigl(W'\otimes \bigwedge^2W''\bigr)^*\\
u&\mapsto& (\Id,u)^*\sigma_1,
\end{array}
$$
 of the morphism  (\ref{www})
is nothing but the transpose of the direct sum of
the morphism
$$
\begin{array}{rcl}
\beta_{W',W''}:   \Hom(W',W'')&\to&\bigl( W\otimes\bigwedge^2W'\bigr)^*\\
v&\mapsto& (\Id,v)^*\sigma_1
\end{array}
$$
introduced above, and its   counterpart
$$\beta_{W'',W'}: \Hom(W'',W')\rightarrow \bigl( W\otimes\bigwedge^2W''\bigr)^*,
$$
obtained by exchanging $W'$ and $W''$ (here we identify
${W'}^*$ with $\bigwedge^2W'$ and similarly for $W''$). It follows from our discussion
above that $\det(\beta_{W',W''})$ and $\det(\beta_{W'',W'})$ both vanish simply along $D'_{\rm red}$. Hence
$\det(\vec f_{W',W''})$ vanishes with multiplicity $2$ along $D'_{\rm red}$.
\end{proof}

\section{The fourfold $Y_\sigma$ as a deformation of  $\Hilb^2(K3)$}

It follows from Theorem \ref{depart} and \cite{guan} that the Hodge numbers of $Y_\sigma$ are the same as those of the Hilbert scheme of pairs of points on a K3 surface. We prove in this section
  the following more precise result.

\begin{theo} \label{theodef} The variety $Y_\sigma$ with its Pl\"ucker polarization is a deformation of
$(S^{[2]},L)$, where $S$ is the  K3 surface of genus 12 introduced in the previous section,
and $L$ is the line bundle  on $S^{[2]}$ whose pull-back to $\widetilde{S\times S}$ is $(\cO_S(1)\boxtimes \cO_S(1))^{10}(-33\widetilde E)$.
\end{theo}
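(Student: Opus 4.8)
The plan is to deform the pair $(S^{[2]},L)$ and the pair $(Y_\sigma,\cO_{Y_\sigma}(1))$ into a common family, using the birational isomorphism $\phi:S^{[2]}\dra Y_\sigma$ of Theorem \ref{phi} and the line bundle computation of Proposition \ref{propphi*H} as the bridge. The basic strategy, following Huybrechts \cite{huy}, is that birational irreducible hyper-K\"ahler manifolds are deformation equivalent; the difficulty, as the introduction warns, is that the general $Y_\sigma$ is \emph{not} birational to a Hilbert scheme — only the special singular degenerations $Y_\sigma$ (for $\sigma\in G(3,V_{10})^*$ general) are birational to $S^{[2]}$, and these $Y_\sigma$ are singular, so Huybrechts' theorem does not apply directly. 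So the first step is to set up a family $\mathcal{Y}\to B$ over a disc (or a smooth curve) whose general fiber is a smooth hyper-K\"ahler $Y_{\sigma_t}$ and whose special fiber $Y_{\sigma_0}$ is the singular, normal, irreducible fourfold of Proposition \ref{Yi}(4), birational to $S^{[2]}$.

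Next I would pass to a simultaneous resolution or, better, exploit the structure of $\phi$ more precisely. Proposition \ref{Yi} tells us that $Y_{\sigma_0}$ has singular locus contained in the surface $Y_\sigma^1\sqcup Y_\sigma^2\sqcup Y_\sigma^3$, and that $\phi^{-1}$ contracts the divisor $D'$ of $S^{[2]}$ to $Y_\sigma^3\cong S'$, the genus-$12$ K3 surface. The key point is to understand $Y_{\sigma_0}$ as obtained from $S^{[2]}$ by an explicit birational modification — a flop or a Mukai-type elementary transformation along the exceptional divisor — and to check that the central fiber, although singular, is ``not too singular'': it should have only the kind of singularities (e.g. along a K3 surface) that are resolved by a small contraction or admit a simultaneous resolution after base change. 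Then the plan is to resolve $\mathcal{Y}\to B$ simultaneously, producing a smooth family $\widetilde{\mathcal{Y}}\to B'$ whose general fiber is still $Y_{\sigma_t}$ (since those are already smooth) and whose special fiber $\widetilde{Y}_{\sigma_0}$ is a smooth hyper-K\"ahler fourfold birational to $Y_{\sigma_0}$, hence birational to $S^{[2]}$. At that point Huybrechts' theorem \cite{huy} applies to $\widetilde{Y}_{\sigma_0}$ and $S^{[2]}$, giving that they are deformation equivalent; concatenating deformations, $Y_{\sigma_t}$ is deformation equivalent to $S^{[2]}$.

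To get the \emph{polarized} statement, I would carry the line bundle along: $\cO_{\mathcal{Y}}(1)$ restricts to the Pl\"ucker polarization on each $Y_{\sigma_t}$, and its pullback to the resolution $\widetilde{Y}_{\sigma_0}$, transported via the birational map to $S^{[2]}$, is computed by Proposition \ref{propphi*H} to be the line bundle $L$ (using that a birational isomorphism of hyper-K\"ahler manifolds identifies $H^2$ compatibly with the Beauville--Bogomolov form, so the class is unambiguous even though $\phi$ is only rational). One has to check that $L$ remains of type $(1,1)$ and nef/big along the whole deformation — since the Pl\"ucker polarization is ample on the smooth $Y_{\sigma_t}$ and its limit is the pullback of $L$, this follows from specialization of the polarization together with the fact that on the central fiber the exceptional divisor $D'_{\rm red}$ is precisely contracted, making $r^*\phi^*\cO_{Y_\sigma}(1)=(\cO_S(1)\boxtimes\cO_S(1))^{10}(-33\widetilde E)$ as in the displayed computation at the end of Section \ref{section2}.

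The main obstacle, and the step that needs the most care, is the simultaneous resolution: one must show that the total space $\mathcal{Y}$ (or an appropriate base change of it) admits a resolution that is an isomorphism over the general fiber and whose central fiber is smooth hyper-K\"ahler and birational to $S^{[2]}$. Concretely this amounts to analyzing the local structure of $\mathcal{Y}$ along $Y_\sigma^3\subset Y_{\sigma_0}$ — identifying the transverse singularity type (one expects a family of surface $A_1$-singularities, or a threefold-type node family, whose Brieskorn simultaneous resolution is available after a double cover of the base) — and verifying that the resolved central fiber is holomorphic-symplectic with $b_2=23$, so that it is indeed an irreducible hyper-K\"ahler fourfold to which \cite{huy} applies. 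Degenerations of $S^{[2]}$ along O'Grady's or Beauville--Donagi's examples provide the template for this analysis, and I would model the argument on \cite{huy} and on the birational geometry of moduli of sheaves on K3 surfaces.
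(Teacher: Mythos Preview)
Your strategy diverges substantially from the paper's, and the step you yourself flag as ``the main obstacle'' is exactly what the paper sidesteps. The paper does \emph{not} attempt a simultaneous resolution of the family $\cY\to\Delta$, nor does it analyze the transverse singularity type of $Y_{\sigma_0}$ along $Y^1_\sigma\cup Y^2_\sigma\cup Y^3_\sigma$. Instead it proves an abstract criterion (Proposition~\ref{varianthuy}): if $\phi:X\dra Y$ is birational from an irreducible hyper-K\"ahler manifold to a \emph{normal} projective degeneration of hyper-K\"ahler manifolds carrying an ample $H$, and if the Hilbert polynomials of $L=\phi^*H$ on $X$ and of $H$ on $Y$ agree, then a small deformation of $(X,L)$ is isomorphic to a small smooth deformation of $(Y,H)$. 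The mechanism is numerical and algebraic rather than geometric: deform $(X,L)$ to $(X_t,L_t)$ with $\Pic(X_t)$ cyclic; use the Hilbert-polynomial hypothesis and the Beauville--Bogomolov form to force $q_X(L)>0$, hence $L_t$ ample; deduce by semicontinuity and base change that $\pi_*\cL^k$ is locally free with fiber $H^0(X,L^k)$ at $0$; and then set $\cY=\cP roj\bigl(\bigoplus_k\pi_*\cL^k\bigr)$, whose central fiber is $Y$ by Lemma~\ref{lemmeh0} and whose general fiber is $X_t$. The equality of Hilbert polynomials is checked by a direct Riemann--Roch computation on both sides (Lemma~\ref{2fin}), and a final deformation-theory lemma shows every small deformation of $(Y_\sigma,\cO_{Y_\sigma}(1))$ arises from deforming $\sigma$.

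Your plan, by contrast, hinges on producing a simultaneous resolution of $\cY\to B$ with smooth hyper-K\"ahler central fiber, and this is a genuine gap. Nothing in Section~\ref{section2} supports the guess that the singularities are transverse $A_1$; the paper only shows $\dim\Sing(Y_{\sigma_0})\le 2$ and normality (Proposition~\ref{Yi}), not that Brieskorn-type simultaneous resolution is available. Even granting a small resolution of the total space, you would still have to prove that the new central fiber is irreducible hyper-K\"ahler with $b_2=23$ before invoking \cite{huy}, and you have no control over this. The paper's route avoids all of that: the singular $Y_{\sigma_0}$ is never resolved---it is \emph{reconstructed} as a $\cP roj$ from sections of powers of $L$ on $S^{[2]}$, and the only input needed beyond Theorem~\ref{phi} and Proposition~\ref{propphi*H} is the Hilbert-polynomial identity $\chi(S^{[2]},L^k)=\chi(Y_\sigma,\cO_{Y_\sigma}(k))=3+\tfrac{55}{2}k^2+\tfrac{121}{2}k^4$, which you do not mention and which is really the crux of the argument.
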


We want to use the degeneration described in the previous section, but we have to be careful, as
the central fiber is very singular and only birationally equivalent to
$S^{[2]}$. We will borrow part of the arguments of \cite{huy}.

The proof of Theorem \ref{theodef} will follow from a computation of Hilbert polynomials and from the following
variant of Huybrechts' theorem saying that birationally equivalent hyper-K\"ahler manifolds are deformation equivalent.

We start from the following more general situation: $X$ is  an irreducible hyper-K\"ahler manifold of dimension $2n$, $Y$ is a  normal  projective variety, and $\phi:X\dra Y$ is a birational map. We will assume
that $Y$ is a projective degeneration of irreducible hyper-K\"ahler
manifolds, which means that there is    an ample line bundle $H$ on $Y$ and a flat projective family
$$(\cY,\cH)\to \Delta,\quad{\rm where}\quad\cH\in \Pic(\cY),$$
with central fiber $(Y,H)$ and with general fiber $(Y_t,H_t)$, with $H_t$ ample on $Y_t$, an irreducible hyper-K\"ahler manifold. Note that this implies in particular that the canonical bundle of $Y$ is trivial on its smooth locus $Y_{\rm reg}$.

\begin{prop}\label{varianthuy}   Assume that  the line bundle
$L:=\phi^*H$ on $X$ has the following property:
\begin{equation}\label{egalhilbert}
\forall k\in \Z\quad  \chi(X,L^k)=\chi(Y,H^k).
\end{equation}
Then a small deformation of $(X,L)$ is isomorphic to a (smooth) deformation of $(Y,H)$.
\end{prop}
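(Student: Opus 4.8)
The plan is to follow the strategy of Huybrechts' theorem \cite{huy} that birationally equivalent irreducible hyper-K\"ahler manifolds are deformation equivalent, with two new features to accommodate: the partner $Y$ of $X$ is singular and enters only as the central fibre of $\cY\to\Delta$, and $\phi$ need not be an isomorphism in codimension one. The first observation is that $L$ is big: by Riemann--Roch on the smooth projective variety $X$, the leading term of $k\mapsto\chi(X,L^k)$ is $L^{2n}k^{2n}/(2n)!$, which by \eqref{egalhilbert} and flatness of $\cY\to\Delta$ equals $H_t^{2n}k^{2n}/(2n)!$ for the general fibre, a positive quantity; hence $L^{2n}>0$, and therefore $q(L)>0$ by the Fujiki relation on $X$. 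Furthermore $L=\phi^*H$ is effective: pull back the sections of the ample bundle $H$ over the open set where $\phi$ is an isomorphism and extend across the complement, which has codimension $\ge 2$ in $X$. Thus $L$ lies in the positive cone of $X$ and not in its opposite.

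The heart of the argument is to propagate the birational correspondence into a deformation. Let $\cX\to(B,0)$ be the Kuranishi family of those deformations of $X$ on which $L$ remains of type $(1,1)$; it is smooth by Bogomolov--Tian--Todorov and the local Torelli theorem, and carries $\cL\in\Pic(\cX)$ with $\cL|_X=L$. The general fibre $(Y_t,H_t)$ of $\cY\to\Delta$ is a polarized irreducible hyper-K\"ahler manifold; using $\phi$ (through a resolution of its graph and a resolution of $Y$) together with the monodromy of the degeneration $\cY\to\Delta$, one produces a Hodge isometry between the polarized second cohomology lattices of $(X,L)$ and of $(Y_t,H_t)$, and hence, transporting the polarized Kuranishi family of $(Y_t,H_t)$ across this isometry, a family $\cY'\to(B,0)$ of deformations of the smooth fibre $Y_t$ over the same base $B$, with $\cH'\in\Pic(\cY')$, together with a birational map $\Phi:\cX\dra\cY'$ over $B$ satisfying $\Phi^*\cH'=\cL$ and restricting to $\phi$ over $0$. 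The divisor contracted by $\phi$ — whose class fails to remain of type $(1,1)$ away from a countable union of divisors of $B$ — is absorbed into the indeterminacy of $\Phi$.

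Now choose $b$ very general in a small punctured neighbourhood of $0\in B$; this is legitimate because the relevant Noether--Lefschetz divisors meet such a neighbourhood only along a countable union of proper analytic subsets. For such $b$ one has $\Pic(X_b)=\Z L_b$ and $\Pic(\cY'_b)=\Z\,\cH'_b$, both $X_b$ and $\cY'_b$ are smooth projective irreducible hyper-K\"ahler manifolds of dimension $2n$ — the latter because it is a deformation of the smooth fibre $Y_t$ — and $\phi_b:X_b\dra\cY'_b$ is birational. A birational map between irreducible hyper-K\"ahler manifolds is an isomorphism in codimension one, and since $\rho(X_b)=1$ the nef cone of $X_b$ is the closure of its positive cone, so there are no flops and $\phi_b$ is an isomorphism, necessarily carrying $L_b$ to $\cH'_b$. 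Hence the small deformation $(X_b,L_b)$ of $(X,L)$ is isomorphic to $(\cY'_b,\cH'_b)$, which is a smooth deformation of $(Y_t,H_t)$, hence of $(Y,H)$; this is the assertion of the proposition.

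The genuinely delicate point is the middle step: because $Y$ is singular, Huybrechts' result cannot be quoted verbatim, and one must actually deform the birational correspondence across the singular central fibre of $\cY\to\Delta$. In practice this requires either running a relative minimal model program for a semistable model of $\cY\to\Delta$ so as to produce a smooth family birationally dominating both sides, or establishing the period identification above and invoking a bimeromorphic Torelli-type statement; in either case one has to control the monodromy of $\cY\to\Delta$, the possible jump of the Picard rank at the central fibre, and the behaviour of the divisor that $\phi$ contracts. The remaining ingredients — positivity of $L$, Noether--Lefschetz genericity of $b$, and the implication ``$\rho=1\Rightarrow$ no birational contractions'' — are standard.
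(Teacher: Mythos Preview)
Your proposal has a genuine gap at the step you yourself flag as ``genuinely delicate'': the construction of the family $\cY'\to B$ and of the birational map $\Phi:\cX\dra\cY'$ over $B$ restricting to $\phi$ over $0$. You say this requires either a relative minimal model program for a semistable model of $\cY\to\Delta$, or a period identification plus a bimeromorphic Torelli statement, and then control of monodromy and of the contracted divisor --- but you do not carry out any of this. In fact the very first ingredient, a Hodge isometry between the polarized $H^2$ lattices of $(X,L)$ and of $(Y_t,H_t)$, essentially presupposes what you are trying to prove: without already knowing that $X$ and $Y_t$ are deformation equivalent, there is no reason their Beauville--Bogomolov lattices should be isometric, and the singular $Y$ does not give you this for free. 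Moreover your $\cY'$ is a family of \emph{smooth} deformations of $Y_t$, so its central fibre $\cY'_0$ is smooth; it cannot literally equal the singular $Y$, and hence $\Phi\vert_0$ cannot literally be $\phi:X\dra Y$. The whole middle paragraph is therefore a wish list rather than an argument.

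The paper's proof avoids all of this by a much more direct route that uses the hypothesis \eqref{egalhilbert} far more heavily than you do. The two key observations are: (i) $\phi$ is an isomorphism between $X\moins D$ and $Y\moins T$ for suitable closed subsets (proved using the $2$-form, as in Huybrechts), hence by normality of $Y$ one gets $\phi^*:H^0(Y,H^k)\isomto H^0(X,L^k)$ for all $k\ge 0$; and (ii) after deforming $(X,L)$ to $(X_t,L_t)$ with $\rho(X_t)=1$, the argument you give shows $L_t$ is ample, so $h^0(X_t,L_t^k)=\chi(X_t,L_t^k)=\chi(X,L^k)=\chi(Y,H^k)=h^0(Y,H^k)=h^0(X,L^k)$ for $k\gg 0$. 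Semicontinuity and base change then make $\pi_*\cL^k$ locally free with fibre $H^0(X,L^k)\isom H^0(Y,H^k)$ at $0$, and one simply takes $\cY:=\cP roj\bigl(\bigoplus_k\pi_*\cL^k\bigr)$: its central fibre is $Y$ and its general fibre is $X_t$. No period map, no MMP, no Torelli statement is needed.
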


\begin{proof}
Let
$T\subset Y$ be the union of the singular locus
of $Y$ and the indeterminacy locus
of $\phi^{-1}$, and let $D\subset X$ be
the union of $\phi^{-1}(T)$ and of the indeterminacy locus of $\phi$.
(Note that in the case where $Y$ is not smooth, $D$ may have divisorial components.)

\begin{lemm}\label{ind} The map $\phi$  induces an isomorphism
$$X\moins  D\isom Y\moins  T.$$
\end{lemm}

\begin{proof}
By construction, $\phi$ induces a morphism $ \phi_D:X\moins  D\to Y\moins  T\subset Y_{\rm reg}$, and $\phi^{-1}$   a morphism $\phi^{-1}_T:  Y\moins  T\to X$. Let $\eta_X$ be a generator of the (1-dimensional) space of holomorphic $2$-forms on $X$ and let $\eta_Y$ be its pull-back by $\phi^{-1}_T$. It is a nonzero holomorphic $2$-form on $Y\moins  T$. The form
$\eta_Y^n$ is nonzero on $Y\moins  T$, hence it does not vanish there, because the canonical bundle of $Y\moins  T$ is trivial
and $Y\moins T$ has no nonconstant holomorphic functions.
In other words, $\eta_Y$ is nondegenerate on $Y\moins  T$. Since $(\phi^{-1}_T)^*(\eta_X)=\eta_Y$, we conclude  that $\phi^{-1}_T$ is \'{e}tale.

Let us show that $\phi_D$ is surjective.
Let $y\in Y\moins T$; then $\phi^{-1}$ is defined
at $y$, and $\phi^{-1}$ is \'{e}tale at $y$. It follows that $\phi$ is defined at $\phi^{-1}(y)$ and
$y=\phi(\phi^{-1}(y))$. As $y\notin T$ and $\phi$
is defined at $\phi^{-1}(y)$, we conclude   $\phi^{-1}(y)\notin D$.

Finally, we have $\phi_D^*(\eta_Y)=\eta_X\vert_{X\moins D}$; in particular, since $\eta_X$ is nondegenerate, we obtain as above that $\phi_D$ is \'{e}tale.

 Hence we have proved that $\phi_D$ is an \'{e}tale surjective birational  morphism between
the smooth varieties $X\moins  D$ and $Y\moins  T$. It is therefore an isomorphism
and the lemma is proved.
\end{proof}

\begin{lemm}\label{lemmeh0} Under the assumptions of Proposition
\ref{varianthuy}, for all $k\ge 0$, the map $\phi^*$ induces an isomorphism
$$H^0(Y,H^k)\isom H^0(X,L^k).$$
\end{lemm}

\begin{proof} Consider the following
composition of maps:
$$H^0(Y,H^k)\to H^0(Y\moins  T,H^k)\stackrel{\phi^*}{\to} H^0(X\moins  D,L^k).$$
The first map is bijective by the normality of $Y$. The second map is an isomorphism by Lemma \ref{ind}.
It follows that we get an isomorphism
$$H^0(Y,H^k)\to H^0(X\moins  D,L^k)$$
which obviously factors as
$$H^0(Y,H^k)\stackrel{\phi^*}{\lra} H^0(X,L^k)\lra H^0(X\moins  D,L^k),$$
where the last map is the restriction map, which is injective.
Hence the map
$\phi^*:H^0(Y,H^k){\to} H^0(X,L^k)$ is also bijective.
\end{proof}

The proof of Proposition \ref{varianthuy} is now immediate.
Indeed, consider a deformation $\pi:(\cX,\cL)\to \Delta $ of the pair $(X,L)$, such that
for a general point  $t\in \Delta$, the group $\Pic (X_t)$ has rank $1$.

 We claim that the line bundle $L_t$ is ample on $X_t$. (This is the only place where
 we will use the fact that $(Y,H)$ is a projective degeneration
 of an irreducible hyper-K\"ahler manifold.)
  Indeed, its Hilbert polynomial
equals the Hilbert polynomial of $L$ on $X$, hence of $H$ on $Y$ by our main assumption, or equivalently of $H_s$ on
 $Y_s$ for general $s$. Its  terms of degree $2n$  and
$2n-2$ are therefore equal to those of
$H_s$ on $Y_s$. Of course, the  terms of degree $2n$ are  positive multiples of
$q_X(L)^n$ and $q_{Y_s}(H_s)^n$ respectively,
where $q_X$ is the Beauville-Bogomolov quadratic form on $H^2(X,\Z)$ (\cite{be}) and
similarly for $q_{Y_s}$. Next, the terms of degree $2n-2$ are multiples of
$q_X(L)^{n-1}$ and $q_{Y_s}(H_s)^{n-1}$, the signs of the coefficients being the same.
This indeed follows from Riemann-Roch formula and the fact
(which we can apply to $X$ and $Y_s$) that for any $2n$-dimensional
irreducible hyper-K\"ahler
manifold $Z$, and any degree-2 class $\alpha$ on $Z$,
$$q_Z(\alpha)^{n-1}=\mu_Z c_2(T_X)\alpha^{2n-2},$$
with $\mu_Z>0$.
In conclusion, $q_X(L)^n$ and $q_{Y_s}(H_s)^n$ have the same sign (and are nonzero), and so do
$q_X(L)^{n-1}$ and $q_{Y_s}(H_s)^{n-1}$. Hence $q_X(L)$ and $q_{Y_s}(H_s)$ have the
same sign. As $q_{Y_s}(H_s)>0$, we get $q_X(L)>0$.
 By
\cite{huy}, this implies now that $X_t$ is projective, and, as $\Pic( X_t)$ is cyclic, either $L_t$ or $L^{-1}_t$ is ample. The second case is impossible
because $H^0(X,L^k)=0$ for $k<0$, hence by semi-continuity, $H^0(X_t,L^k_t)=0$ for $k<0$. Thus the claim is proved.

But then, we have
$$\forall k>0\quad \chi(X,L^k)=\chi(X_t,L^k_t)=h^0(X_t,L^k_t).$$
On the other hand, we have by Lemma \ref{lemmeh0}
$$ h^0(X,L^k)=h^0(Y,H^k)=\chi(Y,L^k)$$
for $k$ large enough, and the last term equals by assumption $\chi(X,L^k)$.
Hence we get
$$\forall k\gg 0  \quad h^0(X,L^k)=h^0(X_t,L^k_t),$$
and it follows by the semi-continuity and  base change theorems that
the locally free sheaf
$\pi_*(\cL^k)$
on $\Delta$ has for fiber $H^0(X,L^k)$ at $0$ and of course $H^0(X_t,L^k_t)$ at $t$.
But then, we get a flat projective family $\cY$ over $\Delta$
by the formula
$$\cY=\cP roj\Bigl( \bigoplus_{k\ge 0} \pi_*(\cL^k)\Bigr).$$
By the above base change result and Lemma \ref{lemmeh0}, the fiber of this family over $0$
is isomorphic to $Y$, endowed with the line bundle $H$, while the fiber over $t$ is $X_t$ endowed with the line bundle
$L_t$.
\end{proof}

Theorem \ref{theodef} will be obtained as a consequence of Proposition
\ref{varianthuy}, applied
to  the birational map constructed in the previous section between
$X=S^{[2]}$ and $Y=Y_\sigma$, with $H=\cO_{Y_\sigma}(1)$ and
$L=\phi^*\cO_{Y_\sigma}(1)$.

As we know that the singular variety $Y_\sigma$ is normal (Proposition \ref{Yi}) and   is  a
projective degeneration of an irreducible hyper-K\"ahler fourfold by Theorem
\ref{depart}, in  order to apply Proposition \ref{varianthuy}, we only need to check the assumptions concerning  the Hilbert polynomials, and this is done
in the following lemma.

\begin{lemm}\label{2fin} The Hilbert polynomials of $\cO_{Y_\sigma}(1)$ and  $L$ coincide.
\end{lemm}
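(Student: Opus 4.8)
The plan is to compute both Hilbert polynomials independently and check that they agree as polynomials in $k$. Since a polynomial of degree $4$ is determined by five values, it actually suffices to verify the equality $\chi(Y_\sigma,\cO_{Y_\sigma}(k))=\chi(S^{[2]},L^k)$ for five values of $k$ (or, more invariantly, to match the five coefficients), so I would set up whichever of these is least painful.

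For the side of $S^{[2]}$, I would use the description $r:\widetilde{S\times S}\to S^{[2]}$ as a double cover branched along $\widetilde E$, together with the computation of Proposition \ref{propphi*H} that $r^*L=(\cO_S(1)\boxtimes\cO_S(1))^{10}(-33\widetilde E)$. Concretely, $r_*\cO_{\widetilde{S\times S}}=\cO_{S^{[2]}}\oplus\cO_{S^{[2]}}(-E/2)$ (the eigensheaf decomposition for the covering involution), so $\chi(S^{[2]},L^k)$ can be extracted from $\chi(\widetilde{S\times S},r^*L^k)$ by projection formula once one knows how $L^k$ pairs with the two summands; alternatively one may work directly on $S^{[2]}$ using the known ring structure of $H^*(S^{[2]},\Q)$ and the Riemann--Roch/Hirzebruch formula for $\chi$ on a hyper-K\"ahler fourfold, which by Theorem \ref{depart} has the same Hodge and characteristic numbers as $\Hilb^2$ of a K3. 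The class $c_1(L)$ on $S^{[2]}$ is $10\delta_H-33\delta$ in the standard basis ($\delta_H$ the class induced by $\cO_S(1)$, $2\delta=[E]$), its Beauville--Bogomolov square is $q(L)=100\cdot 22+33^2\cdot(-2)=22$ as recorded in the introduction, and the Fujiki relation $\int_{S^{[2]}}\alpha^4 = 3\,q(\alpha)^2$ together with the values of $c_2(S^{[2]})\cdot\alpha^2$ then pins down $\chi(S^{[2]},L^k)$ as an explicit quartic in $k$.

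For the side of $Y_\sigma$, I would use the embedding $i_\sigma:Y_\sigma\hookrightarrow G(6,V_{10})$ together with the Koszul resolution of $\cO_{Y_\sigma}$ by $\bigwedge^\bullet\bigl(\bigwedge^3\cS_6\bigr)$ already invoked in Remark \ref{manhan}: this gives
$$\chi(Y_\sigma,\cO_{Y_\sigma}(k))=\sum_{i=0}^{20}(-1)^i\,\chi\!\Bigl(G(6,V_{10}),\ \bigwedge^i\!\bigl(\textstyle\bigwedge^3\cE_6\bigr)\otimes\cO_{G(6,V_{10})}(k)\Bigr),$$
and each term on the right is computable by Bott's theorem (or in practice by Macaulay2, exactly as in Remark \ref{manhan}, which already records the value $3$ at $k=0$). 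Since we are after a polynomial identity, I would evaluate this sum for $k=0,1,2,3,4$ and compare with the quartic obtained above.

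The main obstacle is purely computational rather than conceptual: organizing the Bott-theorem (or machine) calculation of the alternating sum over the twenty-one summands $\bigwedge^i(\bigwedge^3\cE_6)(k)$ without error, and independently carrying out the Riemann--Roch bookkeeping on $S^{[2]}$ with the nonstandard polarization class $10\delta_H-33\delta$. Both computations are finite and mechanical; once the two quartics in $k$ are in hand, the equality of coefficients is immediate and completes the lemma, and hence the verification of hypothesis \eqref{egalhilbert} needed to apply Proposition \ref{varianthuy} and conclude Theorem \ref{theodef}.
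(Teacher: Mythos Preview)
Your proposal is correct and the overall strategy---compute both Hilbert polynomials explicitly and compare---matches the paper's. The $S^{[2]}$ side is essentially identical: the paper also applies Riemann--Roch on $S^{[2]}$ and computes $c_1(L)^4$ and $c_2(T_{S^{[2]}})c_1(L)^2$ by pulling back to $\widetilde{S\times S}$ (your Fujiki-relation shortcut $c_1(L)^4=3\,q(L)^2=3\cdot22^2=1452$ is equivalent and a bit slicker).

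The one genuine difference is on the $Y_\sigma$ side. You propose computing $\chi(Y_\sigma,\cO_{Y_\sigma}(k))$ via the full Koszul resolution on $G(6,V_{10})$ and evaluating at five values of $k$. The paper instead applies Riemann--Roch directly on $Y_\sigma$: since $Y_\sigma$ is hyper-K\"ahler, the odd Chern classes of $T_{Y_\sigma}$ vanish and the formula collapses to
\[
\chi(Y_\sigma,\cO_{Y_\sigma}(k))=\chi(\cO_{Y_\sigma})+\tfrac{1}{24}c_2(T_{Y_\sigma})c_1^2\,k^2+\tfrac{1}{24}c_1^4\,k^4,
\]
so only two intersection numbers on $Y_\sigma$ are needed. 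These are extracted from the Chern classes of $\cS_6^*\vert_{Y_\sigma}$ (computed by Macaulay using that $[Y_\sigma]=c_{20}(\bigwedge^3\cE_6)$), yielding $c_1^4=1452$ and $c_2(T_{Y_\sigma})c_1^2=660$, hence $\chi=3+\tfrac{55}{2}k^2+\tfrac{121}{2}k^4$. Your route works, but the paper's exploits the hyper-K\"ahler structure to avoid the twenty-one-term alternating sum and the five separate evaluations.
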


\begin{proof}
 Let us first compute the Hilbert polynomial of $\cO_{Y_\sigma}(1)$.
 We claim that for any integer  $k$,
$$
\chi(Y_\sigma,\cO_{Y_\sigma}(k))=3+\frac{55 }{2}k^2 +\frac{121 }{2}k^4.
$$
Indeed, the Hilbert polynomial is given by the Riemann-Roch formula. Let us denote by $c_i$ the Chern classes
of the vector bundle $\cS_6^*\vert_{Y_\sigma}$, so in particular $c_1=c_1(\cO_{Y_\sigma}(1))$. Recalling that the class
of $Y_\sigma$ in $G(6,V_{10})$ is $c_{20}(\bigwedge^3\cE_6)$, Macaulay gives us the following
intersection numbers on $Y_\sigma$:
\begin{equation}\label{lesnombres}
c_1c_3=330,\ c_4=105,\ c_1^2c_2=825,\ c_2^2=477,\ c_1^4=1452.
\end{equation}
 As $Y_\sigma$ is a hyper-K\"ahler variety,
its odd-degree Chern classes $c_1(T_{Y_\sigma})$ and $c_3(T_{Y_\sigma})$ vanish.
Hence the Riemann-Roch formula takes the following very simple form:
\begin{equation}\label{rrhyp}
\chi(Y_\sigma,\cO_{Y_\sigma}(k))=\chi(Y_\sigma,\cO_{Y_\sigma})+
\frac{c_2(T_{Y_\sigma})c_1^2}{24}k^2+
\frac{c_1^4}{24}k^4.
\end{equation}
The first term of the sum equals $3$ by Theorem \ref{depart}. According to  (\ref{lesnombres}), the last term equals $\frac{121 }{2}k^4$. For the middle term, we need to compute $c_2(T_{Y_\sigma})c_1^2$.
 This is a tedious but straightforward computation. The tangent bundle $T_{Y_\sigma}$ appears in the normal bundle sequence:
$$0\to T_{Y_\sigma}\to T_{G(6,V_{10})}\vert_{Y_\sigma}\to  \bigwedge^3\cE_6\vert_{ Y_\sigma}\to0.$$
Using the equality $T_{G(6,V_{10})}=\cE_6\otimes \bigl( (V_{10}\otimes\cO_{G(6,V_{10})})/\cS_6\bigr)$,
we now compute
$$c_2(T_{Y_\sigma})=5c_1^2-8c_2.$$
which together with (\ref{lesnombres}) gives
$$c_2(T_{Y_\sigma})c_1^2= 660.  $$
Thus the claim is proved.

We now turn to the computation of the Hilbert polynomial of the line bundle $L$ on
$S^{[2]}$.

This is an  explicit and standard computation. As $S^{[2]}$ is hyper-K\"ahler, formula (\ref{rrhyp}) applies as well to $S^{[2]}$ and $L$:
$$\chi(S^{[2]},L^k)=\chi(S^{[2]},\cO_{S^{[2]}})+
\frac{c_2(T_{S^{[2]}})c_1(L)^2}{24}k^2+
\frac{c_1(L)^4}{24}k^4.$$
The first number in the sum is $3$. It thus suffices to show the equalities
$$c_1(L)^4=1452\quad{\rm and}\quad c_2(T_{S^{[2]}})c_1(L)^2= 660.$$
By Proposition \ref{propphi*H},
the pull-back of $L$  on $\widetilde{S\times S}$ is
 $$(\cO_S(1)\boxtimes \cO_S(1))^{10}(-33 \widetilde E).$$
Letting
$$\ell_i=p_i^*c_1(\cO_S(1))\quad{\rm and}\quad e=[\widetilde  E]$$
on $\widetilde{S\times S}$, we need to show
\begin{eqnarray}
(10( \ell_1+ \ell_2)-33e)^4&=&2904\nonumber\\
(10( \ell_1+ \ell_2)-33e)\cdot r^*c_2(T_{S^{[2]}})&=&1320.\label{e1}
\end{eqnarray}
The first equality follows from
 \begin{equation}\label{dautresnombres}
 \ell_i^2=22,\ \ell_i^3=0,\  \ell_i\ell_je^2=-22,\ e^4=24,
 \end{equation}
 together with the vanishing of the contributions of any odd power of $e$.

 As to the second equality, note the two exact sequences, which compare
 $r^*\Omega_{S^{[2]}}$ and $\eps^*\Omega_{S\times S}$:
 $$0\to  r^*\Omega_{S^{[2]}}\to \Omega_{\widetilde{S\times S}}\to \cO_{\widetilde E}(-\widetilde E)\to 0,$$
 $$0\to  \eps^*\Omega_{S\times S}\to \Omega_{\widetilde{S\times S}}\to\cO_{\widetilde E}(2 \widetilde E)\to 0.$$
 This gives us the following formula for the total Chern class
 of $r^*\Omega_{S^{[2]}}$:
 \begin{equation}
 \label{formulercotagt} r^*c(\Omega_{S^{[2]}})=\eps^*c(\Omega_{S\times S})c(\cO_{\widetilde E}(2 \widetilde E)) c(\cO_{\widetilde E}(-\widetilde E))^{-1}.
 \end{equation}
 For $i\in\{1,2\}$, let  $o_i$ be the class of a fiber of  the projection
  $p_i:\widetilde{S\times S}\to S$; we have
 $$\eps^*c(\Omega_{S\times S})=(1+24 o_1)(1+24 o_2)$$
 and we deduce from (\ref{formulercotagt})
 $$r^*c_2(\Omega_{S^{[2]}})=24o_1+24o_2-3e^2.$$
Equality (\ref{e1}) then
 follows from
 (\ref{dautresnombres}) together with
 $o_1\ell_2^2=o_2\ell_1^2=22$, $o_i\ell_i=0$, and $o_ie^2=-1$.
\end{proof}
At this point, we have shown that a small smooth deformation of
$(Y_\sigma,\cO_{Y_\sigma}(1))$ is isomorphic to a small deformation of $(S^{[2]},L)$. In order to conclude the proof
of  Theorem \ref{theodef}, it only remains to prove  the following lemma (and use Proposition \ref{Yi}.4)).

\begin{lemm} Whenever $Y_\sigma$ has dimension $4$, any small deformation of $(Y_\sigma,\cO_{Y_\sigma}(1))$ is given by a deformation of
$\sigma$.
\end{lemm}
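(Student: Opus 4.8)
The plan is to show that the Kodaira–Spencer map for the family of pairs $(Y_\sigma,\cO_{Y_\sigma}(1))$ parametrized by $[\sigma]\in\P(\bigwedge^3V_{10}^*)$ (modulo the action of $\PGL(V_{10})$) is surjective onto the deformation space of the pair, so that every small deformation is realized by varying $\sigma$. Concretely, I would first recall that small deformations of a polarized variety $(Y,H)$ with $H^2(Y,\cO_Y)$ one-dimensional are unobstructed and governed by $H^1(Y,T_Y)$ together with the constraint that the polarization stays integral (which, since $b_2=23$ and the polarization is primitive of fixed square, cuts the $21$-dimensional deformation space of $Y$ down to the $20$-dimensional space of polarized deformations — matching the $20$ moduli of $\sigma$). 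Then I would identify the tangent space to the $\PGL(V_{10})$-orbit space of forms $\sigma$ at a smooth point with $\bigwedge^3V_{10}^*/(\fsl(V_{10})\cdot\sigma + \C\sigma)$ and construct the natural map from this space to $H^1(Y_\sigma,T_{Y_\sigma})$.

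The key computation is the following. From the Koszul resolution of $\cO_{Y_\sigma}$ on $G(6,V_{10})$ associated with the section $\sigma$ of $\bigwedge^3\cE_6$, together with Bott vanishing (exactly the vanishing already invoked in Remark \ref{manhan} and in the proof of Theorem \ref{cohFsigma}), I would compute $H^1(Y_\sigma,N_{Y_\sigma/G(6,V_{10})})$. The normal bundle sequence
\[
0\to T_{Y_\sigma}\to T_{G(6,V_{10})}\vert_{Y_\sigma}\to N_{Y_\sigma/G(6,V_{10})}=\textstyle\bigwedge^3\cE_6\vert_{Y_\sigma}\to 0
\]
reduces the claim to showing that the connecting map $H^0(Y_\sigma,\bigwedge^3\cE_6\vert_{Y_\sigma})\to H^1(Y_\sigma,T_{Y_\sigma})$ is surjective and that its source is, modulo the image of $H^0(T_{G(6,V_{10})}\vert_{Y_\sigma})$ and the Euler/trivial directions, precisely the $20$-dimensional space of deformations of $\sigma$. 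Here one uses $H^1(Y_\sigma,T_{G(6,V_{10})}\vert_{Y_\sigma})=0$ (again Koszul plus Bott) to get surjectivity, and one uses $H^0(Y_\sigma,\bigwedge^3\cE_6\vert_{Y_\sigma})=\bigwedge^3V_{10}^*$ (the restriction map is an isomorphism, once more by the Koszul/Bott analysis, since the higher cohomology of the twists $\bigwedge^i(\bigwedge^3\cE_6)^\vee\otimes\bigwedge^3\cE_6$ vanishes) to identify the source. The sections of $T_{G(6,V_{10})}$ come from $\fsl(V_{10})$, and their image in $H^0(\bigwedge^3\cE_6\vert_{Y_\sigma})=\bigwedge^3V_{10}^*$ is exactly $\fsl(V_{10})\cdot\sigma$, so the cokernel of $H^0(T_{G(6,V_{10})}\vert_{Y_\sigma})\to H^0(\bigwedge^3\cE_6\vert_{Y_\sigma})\to H^1(T_{Y_\sigma})$ is $\bigwedge^3V_{10}^*/\fsl(V_{10})\cdot\sigma$, a $20$-dimensional space once $\sigma$ is general enough that $[\sigma]$ is a smooth point of the moduli quotient. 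Finally, imposing that the Plücker class $c_1$ remain of Hodge type — equivalently, that the deformation be one for which $\cO_{Y_\sigma}(1)$ deforms — cuts out exactly the polarized deformations, and these are all induced by $\sigma$; so the Kodaira–Spencer map of the $\sigma$-family surjects onto the tangent space of the local moduli of $(Y_\sigma,\cO_{Y_\sigma}(1))$, and by unobstructedness the local moduli of the pair is smooth of the expected dimension, whence the result.

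The main obstacle I anticipate is the cohomology bookkeeping on $Y_\sigma$: one must run the Koszul complex $\bigwedge^\bullet(\bigwedge^3\cS_6)\otimes T_{G(6,V_{10})}$ and $\bigwedge^\bullet(\bigwedge^3\cS_6)\otimes\bigwedge^3\cE_6$ against Bott's theorem on $G(6,V_{10})$, decomposing each $\bigwedge^i(\bigwedge^3\cS_6)$ into irreducible $\GL_6$-summands and checking that none of the resulting bundles contributes to $H^{\ge 1}$ in the relevant total degree — this is precisely the kind of representation-theoretic computation the authors credit to Manivel and Han in Remark \ref{manhan}, and it is the part that is tedious rather than conceptually delicate. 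Once those vanishings are in hand, the identification of the image of $\fsl(V_{10})$ and the dimension count are formal, and unobstructedness of polarized deformations of a hyper-Kähler manifold (together with the fact, from Corollary \ref{cor6avril}, that $b_2(Y_\sigma)=23$ so the Plücker polarization is primitive and the polarized deformation space has the expected dimension $20$) finishes the argument.
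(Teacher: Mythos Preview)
Your proposal has two genuine gaps.

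First, the lemma must cover \emph{singular} $Y_\sigma$: in the paper it is invoked precisely for $\sigma$ on the discriminant hypersurface $G(3,V_{10})^*$, where $Y_\sigma$ is only normal (Proposition~\ref{Yi}). Your argument, however, leans throughout on hyper-K\"ahler structure---unobstructedness, $b_2=23$, $h^1(T_{Y_\sigma})=21$---all of which are established only for smooth $Y_\sigma$. The paper sidesteps this by working with the Atiyah extension $\cP_{Y_\sigma,\cO_{Y_\sigma}(1)}$ and the group $\Ext^1_{Y_\sigma}(\cP_{Y_\sigma,\cO_{Y_\sigma}(1)},\cO_{Y_\sigma})$, which parametrizes first-order deformations of the pair for any local complete intersection; all computations then go through the Koszul resolution and Bott's theorem on $G(6,V_{10})$, and require only that $Y_\sigma$ have the expected dimension.

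Second, even when $Y_\sigma$ is smooth, the vanishing $H^1(Y_\sigma,T_{G(6,V_{10})}\vert_{Y_\sigma})=0$ you invoke is \emph{false}. A dimension count already shows this: $\sigma$ has only $20$ moduli while $h^1(T_{Y_\sigma})=21$, so the connecting map $H^0(N)\to H^1(T_{Y_\sigma})$ cannot be surjective. (Relatedly, the restriction $H^0(G,\bigwedge^3\cE_6)\to H^0(Y_\sigma,\bigwedge^3\cE_6\vert_{Y_\sigma})$ is not an isomorphism either, since $\sigma$ itself lies in the kernel.) What is actually needed is that the one-dimensional cokernel injects into $H^2(Y_\sigma,\cO_{Y_\sigma})$ under cup-product with $c_1(\cO_{Y_\sigma}(1))$, so that the image of the connecting map is exactly the polarized deformation subspace. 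This injectivity is the map $\gamma$ the paper isolates, and proving it requires a further exact sequence (the tautological presentation of $T_{G(6,V_{10})}$) together with another round of Bott vanishing---a nontrivial step absent from your outline.
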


\begin{proof} Let $Z$ be a local complete intersection projective scheme and let $L$ be a line bundle on $Z$. %First-order  deformations of $Z$ are parametrized by $\Ext^1_Z(\Omega_Z,\cO_Z)$.
The first Chern class of $L$, seen as an element of $H^1( Z, \Omega_Z)$,
% If $D:\cO_Z\to \Omega_Z$ is the canonical derivation, we may define a morphism $\cO^*_Z\to\Omega_Z$ of sheaves of abelian groups by $u\mapsto Du/u$. It induces a morphism $$\Pic(Y_\sigma)\isom H^1(Z , \cO^*_Z)\to H^1( Z, \Omega_Z). $$The image of the class of $L$ in $ \Pic(Z) $
defines an extension
 \begin{equation}\label{ext}
 0\to \Omega_Z\to \cP_{Z,L}\to \cO_Z\to 0
  \end{equation}
 and first-order deformations of the pair $(Z,L)$ are parametrized by $\Ext^1_Z (  \cP_{Z,L}, \cO_Z )$.
 %, and the forgetful map is the morphism $\alpha$ in the induced exact sequence:$$H^1(Z , \cO_Z )\lra\Ext^1_Z (  \cP_{Z,L}, \cO_Z )\stackrel{\alpha}{\lra} \Ext^1_Z(\Omega_Z,\cO_Z)\stackrel{{}\smile c}{\lra}  H^2(Z , \cO_Z ).$$

In our situation,  $Y_\sigma$ is the zero-set of the section $\sigma$ of the vector bundle $\cF=\bigwedge^3\cE_6$ on   $G:=G(6,V_{10})$. The discussion above applies to both $(G,\cO_G(1))$ and $(Y_\sigma,\cO_{Y_\sigma}(1))$. Since the normal bundle to $Y_\sigma$ in $G$ is $\cF\vert_{Y_\sigma}$, we obtain an exact sequence
$$0\to \cF^*\vert_{Y_\sigma}\to \cP_{G,\cO_G(1)}\vert_{Y_\sigma}\to\cP_{Y_\sigma,\cO_{Y_\sigma}(1)}\to 0.
$$
from which we deduce an exact sequence
 \begin{equation}\label{ccc}
 H^0({Y_\sigma},\cF\vert_{Y_\sigma})\stackrel{\beta}{\to} \Ext^1_{Y_\sigma} (  \cP_{Y_\sigma,\cO_{Y_\sigma}(1)}, \cO_{Y_\sigma} )
\to
\Ext^1_{Y_\sigma} ( \cP_{G,\cO_G(1)}\vert_{Y_\sigma}, \cO_{Y_\sigma} ).
  \end{equation}
 We need to show that the composition
$$ H^0(G,\cF)\stackrel{\alpha}{\lra} H^0({Y_\sigma},\cF\vert_{Y_\sigma})\stackrel{\beta}{\lra} \Ext^1_{Y_\sigma} (  \cP_{Y_\sigma,\cO_{Y_\sigma}(1)}, \cO_{Y_\sigma} )
$$
is surjective. We will prove that both $\alpha$ and $\beta$ are surjective.

Using the Koszul resolution for $\cO_{Y_\sigma}$, we see that $\alpha$  is surjective if
$$ H^i(G ,\cF\otimes \bigwedge^i\cF^*)=0
$$
for all $i>0$, a fact that can be checked using Bott's theorem and the program LiE, as explained in Remark \ref{manhan}.

To show the surjectivity of $\beta$, it is enough by (\ref{ccc}) to show   that
$\Ext^1_{Y_\sigma} (  \cP_{G,\cO_G(1)}\vert_{Y_\sigma}, \cO_{Y_\sigma} ) $ vanishes.
Consider the exact sequence
$$
H^1({Y_\sigma}, \cO_{Y_\sigma} )
\to \Ext^1_{Y_\sigma} (  \cP_{G,\cO_G(1)}\vert_{Y_\sigma}, \cO_{Y_\sigma} )
\to
H^1(Y_\sigma,T_G\vert_{Y_\sigma})
\stackrel{\gamma}{\to}
H^2({Y_\sigma}, \cO_{Y_\sigma} )$$
obtained from (\ref{ext}). Again, as in Remark \ref{manhan}, one shows using the Koszul resolution and Bott's theorem that $H^1({Y_\sigma}, \cO_{Y_\sigma} )$ vanishes. The map
$$\gamma:H^1(Y_\sigma,T_G\vert_{Y_\sigma})\to H^2({Y_\sigma}, \cO_{Y_\sigma} )
$$
is given by cup-product with $c_1(\cO_{Y_\sigma} (1))$. Using the Koszul resolution again, it is injective if  the cup-product maps
$$
\gamma_i : H^{i+1}(G,T_G\otimes \bigwedge^i\cF^* )
\to H^{i+2}(G, \bigwedge^i\cF^* )
$$
by $c_1(\cO_G(1)) $ are injective for all $i\ge 0$. The tangent bundle $T_G$ is isomorphic to $\cQ_4\otimes\cS_6^*$, hence appears in the exact sequence
 \begin{equation}\label{c3}
0\to \cS_6\otimes\cS_6^*
\to V_{10}\otimes \cS_6^*
\to T_G
\to 0,
\end{equation}
whose extension class is
 $$
 c_1(\cO_G(1))\in H^1(G,\Omega_G)\isom  \Ext^1_G(T_G,\cO_G)\subset \Ext^1_G(T_G,\cS_6\otimes\cS_6^*).
$$
Proceeding as above, one can show $H^{i+1}(G,\cS_6^*\otimes \bigwedge^i\cF^* )=0$ for all $i\ge 0$. In the long exact sequence in cohomology associated with (\ref{c3}), we deduce that  the edge map
$$
H^{i+1}(G,T_G\otimes \bigwedge^i\cF^* )
\stackrel{\gamma_i}{\to} H^{i+2}(G, \bigwedge^i\cF^* )\hookrightarrow  H^{i+2}(G,\cS_6\otimes\cS_6^*\otimes  \bigwedge^i\cF^* )
$$
is bijective. This proves this injectivity of  $\gamma_i$, hence the lemma.
\end{proof}

\section{Further comments and questions\label{section5}}

The geometric invariant theory of the $3$-vectors $\sigma\in\bigwedge^3V_{10}^*$
does not seem to have been studied.
We introduced in section \ref{section1} a natural hypersurface in the moduli space
$$\P(\bigwedge^3V_{10}^*)\,/\!/\PGL(V_{10}^*).$$
It parametrizes those $Y_\sigma$ containing a line in the Pl\"ucker embedding.
Section \ref{section2} was devoted to another hypersurface in this moduli space, parametrizing singular $F_\sigma$.

There is a third natural hypersurface in this moduli space:
it is the set of $\sigma$ for which  $F_\sigma$ contains
a $10$-dimensional Grassmannian $G(2,7)\subset G(3,V_{10})$. Here we
choose a $V_8\subset V_{10}$ together with a nonzero $x$ in $ V_8$, and
we see $G(2,7)$ as the set of $W_3\subset V_{10}$ such that $x\in W_3\subset V_8$.
The fact that this $G(2,7)$ is contained in $F_\sigma$ is equivalent
to the fact that the $2$-form $\Int_x\sigma$ vanishes on $V_8$. That the existence of such
a subvariety of $F_\sigma$ is a divisorial condition on $\sigma$ follows from the equality $h^{9,11}(F_\sigma)=1$ and the semi-regularity of the
embedding $G(2,7)\subset F_\sigma$, which tells us that deforming $F_\sigma$
preserving $G(2,7)$ is equivalent to deforming $F_\sigma$ preserving the Hodge class
$[G(2,7)]$ (see \cite{bloch}).

A related question concerns the existence of a hypersurface in the moduli space where
$Y_\sigma$ is actually isomorphic to $S^{[2]}$ for some $K3$ surface $S$.
This should hold along a hypersurface where the Picard number of $Y_\sigma$ jumps (or equivalently,
by Corollary \ref{cor6avril}, where the dimension of the space of degree-$20$ Hodge classes on $F_\sigma$ jumps).

There are two  families of $K3$ surfaces which are natural candidates, namely those of genus
$16$ and those of genus $21$. Indeed, the first ones admit a rigid rank-$2$ vector bundle
with $10$ independent sections, so that their second Hilbert schemes carry a rigid rank-$4$ vector bundle
with $10$ independent sections, which embeds them into $G(4,10)\isom G(6,10)$. Similarly,
$K3$ surfaces of genus $21$ admit a rigid rank-$3$ vector bundle with $10$ independent sections, so that
their second Hilbert schemes carry a rigid rank-$6$ vector bundle
with $10$ independent  sections, which embeds them into $G(6,10)$.
In both cases and surprisingly enough, the degree of the hyper-K\"ahler subvarieties of $G(6,10)$
that one obtains is $1452$, which is the degree of $Y_\sigma$.
However the other Chern numbers of the tautological vector bundle (see (\ref{lesnombres}))
do not coincide.

\end{document}